\documentclass[11pt]{amsart}

\usepackage[T1]{fontenc}
\usepackage{lmodern}
\usepackage{microtype}
\usepackage{amsmath,amssymb,amsthm,mathtools,mathrsfs}
\usepackage[letterpaper,left=0.90in,right=0.90in,top=1in,bottom=1in]{geometry}
\usepackage[colorlinks=true,allcolors=blue]{hyperref}
\usepackage[nameinlink,capitalise,noabbrev]{cleveref}
\usepackage{tikz-cd}

\makeatletter
\renewcommand{\subsection}{%
  \@startsection{subsection}{2}{\z@}%
  {.5\linespacing\@plus.7\linespacing}%
  {.3\linespacing}%
  {\normalfont\bfseries}%
}
\makeatother

\newtheorem{theorem}{Theorem}[section]
\newtheorem{proposition}[theorem]{Proposition}
\newtheorem{corollary}[theorem]{Corollary}
\newtheorem{lemma}[theorem]{Lemma}
\theoremstyle{remark}
\newtheorem{remark}[theorem]{Remark}

\newcommand{\Q}{\mathbb Q}
\newcommand{\C}{\mathbb C}
\newcommand{\A}{\mathbb A}

\newcommand{\Perv}{\operatorname{Perv}}

\newcommand{\Sh}{\operatorname{Sh}}
\newcommand{\chiorb}{\chi^{\mathrm{orb}}}

\newcommand{\Rm}[1]{\mathrm{#1}}

\newcommand{\Bf}[1]{\mathbf{#1}}
\newcommand{\sh}[1]{\mathcal{#1}}
\newcommand{\bb}[1]{\mathbb{#1}}
\newcommand{\fr}[1]{\mathfrak{#1}}
\newcommand{\tab}{\hspace*{\parindent}}

\title{Perverse Euler Characteristics of Hermitian Locally Symmetric Spaces}

\author{Mert Akdenizli}
\address{Department of Mathematics, Purdue University, West Lafayette, Indiana, USA}
\email{makdeniz@purdue.edu}

\author{Quoc-Anh Tran}
\address{Department of Mathematics,
University of Chicago, Chicago, Illinois, USA}
\email{quocanh@uchicago.edu}

\begin{document}

\begin{abstract}
    We prove that finite-volume locally Hermitian symmetric spaces of noncompact type have nonnegative perverse Euler characteristics. To show this, we obtain a nefness result for the logarithmic cotangent bundle of a smooth toroidal compactification. Combining this with a positivity criterion for Euler characteristics of perverse sheaves, we deduce the nonnegativity result. We further prove that the inequality is strict for perverse sheaves with full support. As applications, we get nonnegativity results for perverse Euler characteristics on various moduli spaces.
\end{abstract}

\maketitle
\tableofcontents

\section{Introduction and main results}
\label{sec:introduction}

Let $\mathcal D$ be a Hermitian symmetric domain of noncompact type and let
$$
  G_{\mathcal D}=\operatorname{Aut}(\mathcal D)^{\circ}.
$$

For a lattice $\Gamma\subset G_{\mathcal D}$, the quotient
$\Gamma\backslash\mathcal D$ is a finite-volume locally Hermitian symmetric orbifold.  We study the nonnegativity of Euler characteristics of perverse
sheaves on the Deligne--Mumford stack attached to this orbifold. Let
$$
  \mathcal X_\Gamma
  :=
  [\mathcal D/\Gamma]
$$

denote the associated analytic Deligne--Mumford quotient stack.  Its
coarse space is the locally symmetric quotient $\Gamma\backslash\mathcal D.$ All constructible complexes and perverse sheaves on
\(\mathcal X_\Gamma\) are understood in the analytic topology.  We write
$$
  \chiorb(\mathcal X_\Gamma,K)
$$

for the Euler--Satake characteristic of a constructible complex
\(K\in D^b_c(\mathcal X_\Gamma,\Q)\).  Its definition and its behavior
under finite \'etale covers are recalled in
Section~\ref{subsec:PEC}. If \(\Gamma\) is torsion-free, then \(\mathcal X_\Gamma\) is represented
by the complex manifold $X_\Gamma=\Gamma\backslash\mathcal D,$ which carries its natural smooth quasi-projective algebraic structure.
In this case \(\chiorb\) is the ordinary Euler characteristic.

Our main result is the following:

\begin{theorem}[Main theorem]
\label{thm:main}
Let $\mathcal D$ be a Hermitian symmetric domain of noncompact type and let
$\Gamma\subset\operatorname{Aut}(\mathcal D)^{\circ}$ be a lattice.  Then
$$
  \chiorb(\mathcal X_\Gamma,P)\geq 0
$$
for every $P\in\Perv(\mathcal X_\Gamma,\Q).$ If $\Gamma$ is torsion-free, so that $X_\Gamma=\Gamma\backslash\mathcal D$ is a smooth quasi-projective variety, then
$$
  \chi(X_\Gamma,P)\geq 0
  \qquad
  \text{for every }P\in\Perv(X_\Gamma,\Q).
$$

\end{theorem}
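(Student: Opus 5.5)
We first reduce to the torsion-free case, then combine a positivity criterion for perverse Euler characteristics with nefness of the logarithmic cotangent bundle of a toroidal compactification.

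\textbf{Reduction to torsion-free $\Gamma$.} By Selberg's lemma (using arithmeticity when the real rank is at least two, and finite generation of lattices in general), $\Gamma$ contains a torsion-free normal subgroup $\Gamma'$ of finite index. The induced map $\pi\colon X_{\Gamma'}\to\mathcal X_\Gamma$ is finite étale of degree $d=[\Gamma:\Gamma']$, up to the factor coming from the kernel of $\Gamma$ acting on $\mathcal D$. As recalled in Section~\ref{subsec:PEC}, the Euler--Satake characteristic is multiplicative under such covers:
$$\chi(X_{\Gamma'},\pi^*P)=d\cdot\chiorb(\mathcal X_\Gamma,P).$$
Since $\pi$ is étale, $\pi^*$ preserves perversity. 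So it suffices to prove the torsion-free statement.

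\textbf{Torsion-free case.} Now $X=X_\Gamma$ is smooth quasi-projective. I would invoke a positivity criterion of the following type, due to Franecki--Kapranov and refined via characteristic cycles. If $X$ admits a smooth compactification $\bar X$ with simple normal crossings boundary $D$ such that $\Omega^1_{\bar X}(\log D)$ is nef, then $\chi(X,P)\ge 0$ for every perverse sheaf $P$.

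The mechanism is as follows. The characteristic cycle $CC(P)$ is an effective conic Lagrangian cycle in $T^*X$, and Kashiwara's index theorem gives $\chi(X,P)$ as its intersection with the zero section. Take closures inside the total space of the log cotangent bundle $T^*\bar X(\log D)$. Each component $\overline{T^*_ZX}$ has intersection with the zero section equal to the top Segre/Chern-type degree of a nef bundle restricted to a resolution of $\bar Z$. Such degrees are nonnegative by Fulton--Lazarsfeld positivity for nef bundles.

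For the compactification, take a smooth projective toroidal compactification $\bar X$ in the sense of Ash--Mumford--Rapoport--Tai, choosing a fine enough $\Gamma$-admissible cone decomposition after passing to a neat subgroup, which is harmless by the first step. Mumford's proportionality theory identifies $\Omega^1_{\bar X}(\log D)$ with the canonical extension of the automorphic bundle attached to the cotangent representation. The invariant Bergman metric on $\mathcal D$ has nonpositive holomorphic bisectional curvature, so the dual $T_{\mathcal D}$ is seminegative and $\Omega^1_{\mathcal D}$ is Griffiths seminegative in the dual sense. On $X$ this gives a metric on $\Omega^1_X$ with seminegatively curved dual. Mumford showed that this metric is good, with only logarithmic singularities along $D$.

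\textbf{Main obstacle.} The key step is the nefness result: passing from a singular metric with semipositive curvature on $\mathcal O_{\mathbb P(\Omega^1_X)}(1)$ to nefness of $\mathcal O(1)$ on $\mathbb P(\Omega^1_{\bar X}(\log D))$. I would first test degrees on curves $C\subset\mathbb P(\Omega^1_{\bar X}(\log D))$. For curves not contained over $D$, Mumford goodness makes the curvature current integrable with no Lelong mass, so the degree is the integral of a semipositive form and hence nonnegative.

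For curves lying over $D$, I would use the structure of the boundary. Over each stratum, the restriction of the log cotangent bundle splits into pieces: a pullback from a lower-dimensional locally symmetric space (handled inductively), a unipotent or abelian-variety part that is trivial or nef, and trivial log directions. An extension of nef bundles is nef, which closes the induction.
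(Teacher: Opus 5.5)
Your reduction to torsion-free (or neat) $\Gamma$ is fine, and so is your use of the logarithmic characteristic-cycle criterion (Theorem~\ref{thm:AP}). The genuine gap is in the compactification step. AMRT toroidal compactifications, and Mumford's theory of good metrics and canonical extensions, exist only for (neat) \emph{arithmetic} quotients, but the theorem is stated for arbitrary lattices. You invoke arithmeticity in real rank $\geq 2$, yet you never treat what it leaves out:
non-arithmetic lattices in $\mathrm{PU}(n,1)$, such as non-arithmetic Fuchsian groups or Deligne--Mostow lattices for $n=2,3$;
and reducible lattices that have such a factor.
For these there is no AMRT compactification of $X_\Gamma$, so your argument does not start. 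The paper fills this in four steps. First, it passes to a finite-index product $\Lambda_1\times\cdots\times\Lambda_r$ of irreducible lattices (Proposition~\ref{prop:neat-product-cover}). By Margulis, every non-arithmetic factor then has real rank one and is therefore a ball lattice. For such a factor it uses Mok's smooth toroidal compactification and Cadorel's nefness theorem (Theorem~\ref{thm:cadorel-ball}). Finally, Lemma~\ref{lem:product-log-nef} reassembles nefness on $\prod_i\overline X_i$ before Theorem~\ref{thm:AP} is applied. Without this product reduction and a separate treatment of non-arithmetic ball factors, your proof covers only arithmetic $\Gamma$.

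In the arithmetic case, your route to nefness (the Bergman metric with nonpositive bisectional curvature plus Mumford goodness) differs from the paper's, and its hardest part is only gestured at. Curves not lying over $D$ are handled by the good-metric argument. Your induction for curves over $D$, however, does not close as stated. Closures of boundary strata are not toroidal compactifications of lower-dimensional locally symmetric varieties; they compactify torus torsors over abelian schemes over boundary locally symmetric varieties. So the inductive hypothesis must be set up for these mixed objects with compatible cone decompositions. Moreover, nefness of the ``abelian part'' is itself a logarithmic semipositivity statement for a Hodge bundle. The paper sidesteps all of this boundary analysis:
it realizes $T_X$ as the piece $E^{-1,1}$ of the adjoint polarized VHS;
it uses Mumford's $(\Omega^1_X)^{\mathrm{can}}\simeq\Omega^1_{\overline X}(\log D)$ and Harris's identification of the Mumford and Deligne extensions to get $\overline{E}^{-1,1}\simeq T_{\overline X}(-\log D)$;
it observes that $\overline{\theta}$ kills $\overline{E}^{-1,1}$;
and it applies Brunebarbe's theorem that duals of subbundles in the kernel of a log Higgs field are nef (or Fujino--Fujisawa for $\overline{\mathcal F}^1$).
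This handles boundary curves automatically.
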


If \(X\) is a smooth irreducible complex variety of dimension
\(n\) and \(P\in\Perv(X,\Q)\) has full support, then there is a dense
Zariski-open subset \(U\subset X\) such that $P|_U\simeq L[n]$ for a local system \(L\) on \(U\). The rank of \(L\) is called the
generic rank of \(P\) and is denoted by \(r(P)\).  If
\(\operatorname{Supp}(P)\neq X\), we set \(r(P)=0\).

For a perverse sheaf on \(\mathcal X_\Gamma\), we define its generic rank to be the generic rank of its pullback to a neat finite \'etale cover \(Y\to\mathcal X_\Gamma\).  This is independent of the chosen cover because any two finite tower admits a common one finite \,etale pullback does not change the rank of a local system.

Following \cite{ArapuraMesePatel} we also improve the result to strict inequality for perverse sheaves with full support. 

\begin{proposition}
\label{prop:strict}
    With the notation of Theorem~\ref{thm:main}, let \(n=\dim_{\C}\mathcal D\). Then for every \(P\in\Perv(\mathcal X_\Gamma,\Q)\),
    \[\chiorb(\mathcal X_\Gamma,P) \geq r(P)(-1)^n\chiorb(\mathcal X_\Gamma).\]
    In particular for a perverse sheaf $P$ which has full support we have
    \[
\chiorb(\mathcal X_\Gamma,P)>0.\]
\end{proposition}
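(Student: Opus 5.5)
Since \(\chiorb\) is multiplicative under finite \'etale covers (Section~\ref{subsec:PEC}), and both \(r(P)\) and \(\chiorb(\mathcal X_\Gamma)\) scale by the degree, the inequality is invariant under passing to a neat finite-index subgroup \(\Gamma'\subset\Gamma\). We therefore first reduce to \(\Gamma\) torsion-free with unipotent monodromy at infinity. In that setting \(X=X_\Gamma\) is smooth quasi-projective and admits a smooth toroidal compactification \(\bar X\) with simple normal crossings boundary \(D\).

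The approach follows \cite{ArapuraMesePatel}. Choose a dense Zariski-open \(U\subset X\) with \(P|_U\simeq L[n]\) and \(L\) of rank \(r(P)\). Let \(Z=X\setminus U\) and consider the triangle \(j_!j^*P\to P\to i_*i^*P\to\). By additivity,
\[
\chi(X,P)=\chi(X,j_!L[n])+\chi(Z,i^*P).
\]
A cleaner route is to use the characteristic cycle (Kashiwara's index formula): \(\chi(X,P)=\sum_S m_S\,(CC\text{-component}\cdot[0_{T^*X}])\), where the intersection is computed in \(T^*\bar X(\log D)\) after closing up. The characteristic cycle is \(CC(P)=r(P)[T^*_XX]+\sum_{S\neq X} m_S[\overline{T^*_S X}]\) with all \(m_S\geq 0\) because \(P\) is perverse. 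The zero-section term contributes \(r(P)(-1)^n c_n(\Omega^1_{\bar X}(\log D))=r(P)(-1)^n\chi(X)\), using Gauss--Bonnet for the log cotangent bundle (Mumford's proportionality).

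The key input is the positivity criterion for Euler characteristics of perverse sheaves used in the proof of Theorem~\ref{thm:main}, together with the nefness of \(\Omega^1_{\bar X}(\log D)\) established there. Nefness implies that each conormal contribution \([\overline{T^*_S X}]\cdot[0]\) is nonnegative, being an intersection number of a nef bundle restricted to the closure of a conormal variety. Summing, we get
\[
\chi(X,P)\ge r(P)(-1)^n\chi(X).
\]
The main obstacle is making this decomposition precise: the paper's positivity criterion likely gives nonnegativity of the whole sum, and we must check that it applies termwise to each component \(\overline{T^*_S X}\) for \(S\neq X\), i.e.\ to the perverse pieces supported on proper strata. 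If termwise application fails, I would instead argue by induction on the support: apply Theorem~\ref{thm:main} to the perverse constituents of \(P\) supported on \(Z\), and handle \(j_{!*}L[n]\) via its characteristic cycle.

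For strictness, it remains to show \((-1)^n\chi(X)>0\). By Hirzebruch proportionality, \((-1)^n\chi(X_\Gamma)\) equals a positive volume constant times the Euler number of the compact dual \(\check{\mathcal D}\), which is positive since \(\check{\mathcal D}\) is a flag variety with only even cohomology. Alternatively, this follows from the Gauss--Bonnet integrand of the Bergman metric having sign \((-1)^n\), as the curvature is nonpositive and the top Chern form is a definite multiple of the volume form. When \(P\) has full support we have \(r(P)\ge1\), and the strict inequality follows.
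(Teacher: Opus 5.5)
This is essentially the paper's argument: you pass to the neat product cover via the finite \'etale degree formula, then apply the Arapura--Mese--Patel bound. That bound rests on the Wu--Zhou logarithmic index formula, where the zero-section component contributes $r(P)\deg c_n(\Omega^1_{\bar X}(\log D))=r(P)(-1)^n\chi(X)$ and every other component contributes $\geq 0$ by cone positivity in the nef bundle $\Omega^1_{\bar X}(\log D)$, which holds for each irreducible conic $n$-cycle separately, so your termwise worry and the fallback are unnecessary; strictness then comes, as in the paper, from Mumford's proportionality (Hwang's version for non-arithmetic ball factors) together with $\chi(\check{\mathcal D})>0$.

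Two slips to fix: under a degree-$d$ cover it is $\chiorb(\mathcal X_\Gamma,P)$ and $\chiorb(\mathcal X_\Gamma)$ that scale by $d$ while $r(P)$ is unchanged (if $r(P)$ scaled as well, the right-hand side would pick up $d^2$ and the reduction would fail), and your zero-section term carries a spurious extra factor $(-1)^n$---the identity $\deg c_n(\Omega^1_{\bar X}(\log D))=(-1)^n\chi(X)$ is logarithmic Gauss--Bonnet, not proportionality.
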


The stack formulation is needed when the lattice has torsion.  In
particular, the first assertion of Theorem~\ref{thm:main} concerns the
Deligne-Mumford quotient $\mathcal X_\Gamma$, rather than the possibly
singular coarse quotient $\Gamma\backslash\mathcal D$.

Theorem~\ref{thm:main} gives a finite-level statement for Shimura
varieties.  Let $(G,\mathcal X)$ be a Shimura datum and let
$K\subset G(\mathbb A_f)$ be a compact open subgroup.  Choose a normal neat compact open subgroup $K'\triangleleft K$ and define the finite-level
Deligne-Mumford Shimura stack by
\begin{equation}
\Sh_K^{\mathrm{DM}}(G,\mathcal X)
:=
\bigl[\Sh_{K'}(G,\mathcal X)/(K/K')\bigr].
\label{eq:DM-Shimura-stack}
\end{equation}
This is an algebraic Deligne-Mumford stack.  Its coarse moduli space is the usual finite-level Shimura variety
$\Sh_K(G,\mathcal X)$.

\begin{corollary}
\label{cor:shimura}
Let $(G,\mathcal X)$ be a Shimura datum and let
$K\subset G(\mathbb A_f)$ be an arbitrary compact open subgroup.  Then
$$
  \chiorb\bigl(\Sh_K^{\mathrm{DM}}(G,\mathcal X)_{\C},P\bigr)
  \geq0
$$
for every
$$P\in\Perv\bigl(\Sh_K^{\mathrm{DM}}(G,\mathcal X)_{\C},\Q\bigr).$$

Strict inequality holds whenever $P$ has full support.
When $K$ is neat, the stack
$\Sh_K^{\mathrm{DM}}(G,\mathcal X)_{\C}$ is represented by the usual smooth
finite-level Shimura variety, and $\chiorb$ becomes the ordinary Euler
characteristic.
\end{corollary}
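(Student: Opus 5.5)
We derive Corollary~\ref{cor:shimura} from Theorem~\ref{thm:main} and Proposition~\ref{prop:strict}. The plan is to decompose the complex points of the Shimura stack into a finite disjoint union of stacks of the form $\mathcal X_{\Gamma}$, and then use additivity of $\chiorb$ over connected components.

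First, fix a connected component $\mathcal X^+$ of $\mathcal X$. This is a Hermitian symmetric domain $\mathcal D$ of noncompact type. For the neat subgroup $K'$, the usual description gives
\[
\Sh_{K'}(G,\mathcal X)(\C)=\coprod_{g} \Gamma'_g\backslash \mathcal X^+ ,\qquad \Gamma'_g=G(\Q)_+\cap gK'g^{-1},
\]
where $g$ runs over the finite set $G(\Q)_+\backslash G(\A_f)/K'$. Each $\Gamma'_g$ is a neat arithmetic subgroup. Its image in $\operatorname{Aut}(\mathcal D)^\circ$ is a torsion-free lattice, because the kernel of $G^{\mathrm{ad}}(\mathbb R)^+\to\operatorname{Aut}(\mathcal D)^\circ$ is compact and neatness kills the finite intersection with it. The finite group $K/K'$ acts on the right and permutes these components. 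The stabilizer of a component acts on it through $\Gamma_g/\Gamma'_g$, where $\Gamma_g=G(\Q)_+\cap gKg^{-1}$.

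Taking the stack quotient orbit by orbit then gives
\[
\Sh^{\mathrm{DM}}_K(G,\mathcal X)_\C\simeq \coprod_{[g]} [\Gamma'_g\backslash\mathcal D/(\Gamma_g/\Gamma'_g)] \simeq \coprod_{[g]}\mathcal X_{\bar\Gamma_g}.
\]
Here $\bar\Gamma_g$ is the image of $\Gamma_g$ in $\operatorname{Aut}(\mathcal D)^\circ$, which is a lattice since $\Gamma_g$ is arithmetic. The step I expect to be the main obstacle is the identification of $[\mathcal D/\Gamma_g]$ with $\mathcal X_{\bar\Gamma_g}$. The issue is elements of $\Gamma_g$ that act trivially on $\mathcal D$, namely the kernel $Z_g=\Gamma_g\cap \ker$, which is finite and central-ish. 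This kernel produces a gerbe $[\mathcal D/\Gamma_g]\to[\mathcal D/\bar\Gamma_g]$ with finite band $Z_g$. I will handle it directly rather than trying to avoid it.

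For the gerbe, the plan is as follows. Pushforward along a gerbe banded by a finite group $Z$ is t-exact for perverse sheaves. It multiplies Euler--Satake characteristics by $1/|Z|$, by the same definition recalled in Section~\ref{subsec:PEC}, since this is a degree-$1/|Z|$ proper map. The Euler characteristic computed upstairs equals $|Z|^{-1}$ times that of the pushforward. Pushforward also preserves full support and generic rank up to the positive factor coming from the $Z$-invariants decomposition. Alternatively, I can pass to the neat cover $\Gamma'_g$, where no gerbe appears, and use multiplicativity of $\chiorb$ under the finite étale map $\mathcal X_{\Gamma'_g}\to[\mathcal D/\Gamma_g]$ of degree $[\Gamma_g:\Gamma'_g]$. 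Combined with Theorem~\ref{thm:main} applied to the torsion-free lattice $\Gamma'_g$, this already yields nonnegativity for pullbacks, and that suffices: pullback is perverse and $\chiorb$ scales by the positive degree.

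Finally, a perverse sheaf $P$ on the disjoint union restricts to perverse sheaves $P_g$ on each component, and $\chiorb(P)=\sum_g\chiorb(P_g)\ge0$. If $P$ has full support, then every $P_g$ has full support. Proposition~\ref{prop:strict}, transported through the degree argument, gives $\chiorb(P_g)>0$, so the sum is strictly positive. When $K$ is neat we may take $K'=K$. The quotient by the trivial group is then the Shimura variety itself, every $\Gamma_g$ is torsion-free, and $\chiorb$ reduces to $\chi$ as stated after Theorem~\ref{thm:main}.
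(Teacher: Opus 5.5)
Your working route is essentially the paper's proof: pull back along the degree-$[\Gamma_g:\Gamma'_g]$ quotient map $\Gamma'_g\backslash\mathcal D\to[(\Gamma'_g\backslash\mathcal D)/(\Gamma_g/\Gamma'_g)]$ on each component, apply Theorem~\ref{thm:main} and Proposition~\ref{prop:strict} to the neat quotient, and divide by the degree. The paper does this in one step with the finite \'etale atlas $\Sh_{K'}(G,\mathcal X)_{\C}\to\Sh_K^{\mathrm{DM}}(G,\mathcal X)_{\C}$ and Proposition~\ref{prop:PEC-finite-etale}, so your double-coset decomposition is just bookkeeping.

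You should, however, delete the gerbe paragraph. Pushforward along a gerbe with band $Z$ keeps only the $Z$-invariant part: on $BZ$ it kills every nontrivial irreducible representation $V$, whose $\chiorb$ is $\dim V/|Z|>0$. So it neither rescales $\chiorb$ by $1/|Z|$ nor preserves full support. Moreover, the kernel contains $Z(\Q)\cap gKg^{-1}$, which can be infinite (e.g.\ for Hilbert modular data). The components must therefore be written as $[(\Gamma'_g\backslash\mathcal D)/(\Gamma_g/\Gamma'_g)]$, not as $[\mathcal D/\Gamma_g]$ or $\mathcal X_{\bar\Gamma_g}$.
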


The geometric input needed is the following nefness theorem for
toroidal compactifications of Shimura varieties.

\begin{theorem}
\label{thm:shimura-log-nef}
Let $X$ be a connected component of a Shimura variety at neat finite level,
and let
$$
  j\colon X\hookrightarrow\overline X
$$
be a smooth projective toroidal compactification whose boundary $D=\overline X\setminus X$ is a simple normal-crossings divisor.  Then
$$
  \Omega^1_{\overline X}(\log D)
$$
is nef.
\end{theorem}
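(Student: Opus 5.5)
We will show that $\Omega^1_{\overline X}(\log D)$ is nef by producing enough sections of a twisted symmetric power, controlled by the Bergman metric. The approach has three ingredients: Mumford's theory of good metrics, the nonpositive holomorphic bisectional curvature of the invariant metric on $\mathcal D$, and Mumford's identification $\Omega^1_{\overline X}(\log D)\simeq$ the canonical (Mumford) extension of the automorphic bundle $\Omega^1_X$. Recall that on a smooth toroidal compactification at neat level, the cotangent bundle $\Omega^1_X$ is an automorphic vector bundle. Its Mumford extension is $\Omega^1_{\overline X}(\log D)$, and the $G_{\mathcal D}$-invariant Hermitian metric $h$ on $T\mathcal D$ descends to a metric on $\Omega^1_X$ that is good in Mumford's sense relative to $D$.

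\textbf{Step 1: curvature sign.} The curvature tensor of the Bergman metric on a bounded symmetric domain has nonpositive holomorphic bisectional curvature. Equivalently, $T_X$ with $h$ is Griffiths seminegative, so the dual metric $h^*$ on $\Omega^1_X$ is Griffiths semipositive. Passing to $\pi\colon \mathbb P(T_{\overline X}(-\log D))\to\overline X$, the space of hyperplanes in $\Omega^1_{\overline X}(\log D)$, we get the tautological quotient line bundle $\mathcal O(1)$. On the open part $\pi^{-1}(X)$ it carries a smooth metric with semipositive curvature form.

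\textbf{Step 2: extension across the boundary.} Because $h^*$ is good, the induced metric on $\mathcal O(1)$ has at worst logarithmic (Poincaré-type) growth near $\pi^{-1}(D)$. Its local weights are therefore plurisubharmonic, bounded above, and of vanishing Lelong numbers. A psh function that is locally bounded above extends across the pluripolar set $\pi^{-1}(D)$. The result is a closed positive current $T\in c_1(\mathcal O(1))$ on the compact manifold $\mathbb P$, with Lelong numbers zero everywhere.

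\textbf{Step 3: from the current to nefness.} By Demailly's regularization, a class containing a positive current with identically zero Lelong numbers is nef. Alternatively, one can check nefness curve by curve. For an irreducible curve $C\subset\mathbb P$ not contained in $\pi^{-1}(D)$, we have $\mathcal O(1)\cdot C=\int_C T|_C\ge 0$, using the restriction of the psh weight, which is not identically $-\infty$ on $C$. This gives nefness of $\mathcal O(1)$, hence of $\Omega^1_{\overline X}(\log D)$.

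\textbf{Main obstacle: curves inside the boundary.} These are curves $C\subset\pi^{-1}(D)$, where the restriction argument gives nothing. There are two remedies, and I would try the first.
\begin{enumerate}
\item Use the zero Lelong numbers and Demailly's regularization. The currents obtained are smooth with arbitrarily small negative part, which yields nefness globally without any restriction to curves. The key check is that the growth estimate of Step 2 really gives zero Lelong numbers. The weights are $O(\log\log)$ in the boundary coordinates, which makes this plausible.
\item If that fails, induct on boundary strata. Restricted to a boundary stratum $D_I$, the residue sequences express $\Omega^1_{\overline X}(\log D)|_{D_I}$ as an extension. Its pieces are a trivial bundle coming from the residues, which is nef, and the log cotangent bundle of a toroidal compactification of a torus bundle over a smaller Shimura variety, which is nef by induction. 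Here one needs care with the torus-fiber directions, where the toroidal structure makes the bundle trivial along torus orbits. Extensions of nef bundles are nef, which closes the induction.
\end{enumerate}
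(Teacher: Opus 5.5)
Your argument is correct, but it follows a different route from the paper's. The paper argues Hodge-theoretically. It builds the adjoint polarized variation on $\mathfrak g$, and uses Harris's comparison of Mumford's and Deligne's canonical extensions to obtain a logarithmic Higgs bundle $(\bar E,\bar\theta)$ with $\bar E^{-1,1}\simeq T_{\overline X}(-\log D)$ and $\bar\theta(\bar E^{-1,1})=0$. It then quotes Brunebarbe's theorem that the dual of a subbundle killed by the log Higgs field is nef; alternatively it applies Fujino--Fujisawa to the lowest Hodge piece $\bar{\mathcal F}^1\simeq\Omega^1_{\overline X}(\log D)$.

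Your proof essentially opens that black box in this special case:
\begin{itemize}
\item Griffiths' curvature computation for the Hodge metric on $\ker\theta$ is replaced by the nonpositive bisectional curvature of the invariant metric. For the adjoint variation these are essentially the same metric on $E^{-1,1}\simeq T_X$.
\item Schmid-type norm estimates are replaced by Mumford's goodness.
\item The final step is Demailly's regularization, equivalently Boucksom's criterion that a pseudoeffective class with vanishing minimal multiplicities is nef.
\end{itemize}
Your route needs no variation of Hodge structure, no Deligne extension and no Harris comparison. It uses only Mumford's good-metric theorem, his identification of the canonical extension of $\Omega^1_X$ with $\Omega^1_{\overline X}(\log D)$, and an analytic nefness criterion, and it makes the curvature mechanism explicit. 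The paper's route is shorter given the cited results, stays in algebro-Hodge-theoretic language, and gives a second proof via Fujino--Fujisawa for free.

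Two points need tightening.
\begin{itemize}
\item Mumford's bounds only give $|\varphi-\varphi_0|\le C\log(-\log|t_1\cdots t_k|)+C'$ for the weight $\varphi$ of $\mathcal O(1)$, so $\varphi$ is not literally bounded above. You get the upper bound, and hence the psh extension across $\pi^{-1}(D)$, as follows. First extend $\varphi+\epsilon\log|t_1\cdots t_k|$, which is bounded above near $\pi^{-1}(D)$. Then apply the maximum principle on small polydiscs, whose distinguished boundaries avoid $\pi^{-1}(D)$; this bound is uniform in $\epsilon$. It is the lower bound that forces all Lelong numbers to vanish.
\item Once you invoke Demailly's criterion, curves inside $\pi^{-1}(D)$ are covered automatically. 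So your ``main obstacle'' is not an obstacle, and the inductive remedy (2) can be dropped; as sketched, it would require a genuine analysis of the boundary strata.
\end{itemize}
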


For complex-ball quotients, the corresponding statement is due to Cadorel.

\begin{theorem}[Cadorel]
\label{thm:cadorel-ball}
Let $\Delta\subset\operatorname{Aut}(\mathbb B^n)$ be a finite-volume lattice
whose parabolic elements are unipotent, and let
$$
  X=\Delta\backslash\mathbb B^n.
$$
Let
$$
  X\hookrightarrow\overline X
$$
be its smooth projective toroidal compactification and let $D=\overline X\setminus X$ be the boundary.  Then
$$
  \Omega^1_{\overline X}(\log D)
  \quad\text{is nef}.
$$

\end{theorem}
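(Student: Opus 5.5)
The plan is to follow Cadorel's approach: build a singular hermitian metric on the logarithmic tangent bundle whose curvature has the right sign and whose singularities along $D$ are mild. Nefness will then be deduced from the resulting positivity on the tautological line bundle.

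\textbf{Setup.} Put $V=\Omega^1_{\overline X}(\log D)$ and let $\pi\colon\mathbb P(V)\to\overline X$ be the projective bundle of hyperplanes (Grothendieck convention). By definition, $V$ is nef if and only if $\mathcal O_{\mathbb P(V)}(1)$ is nef. Since the dual of $\mathcal O(1)$ sits inside $\pi^*T_{\overline X}(-\log D)$, a metric on $T_{\overline X}(-\log D)$ induces a metric on $\mathcal O(1)$.

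\textbf{Step 1: the metric and its curvature.} On $X$, use the Bergman metric $h$ induced from the invariant metric on the ball $\mathbb B^n$. Its holomorphic bisectional curvature is negative, and in fact constant holomorphic sectional curvature $-c$. A standard curvature computation then shows that the dual metric $h^*$ on $\Omega^1_X$ is Griffiths semipositive. Consequently the induced metric on $\mathcal O_{\mathbb P(V)}(1)|_{\pi^{-1}(X)}$ has semipositive curvature. For the ball this is explicit: the Chern curvature form of $\mathcal O(1)$ is a nonnegative combination of $\pi^*\omega_h$ and the fiberwise Fubini--Study form.

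\textbf{Step 2: boundary behaviour.} Near a boundary point, take the toroidal coordinates $(z',w)$, in which $D=\{w=0\}$ and the cusp has unipotent monodromy (this is where the hypothesis on parabolic elements is used). The Siegel-domain model gives the Poincaré-type asymptotics of Mumford's goodness theory: the metric $h$ is good on $T_{\overline X}(-\log D)$. Concretely, with respect to the frame $(\partial_{z'}, w\partial_w)$ it is comparable to $\operatorname{diag}\bigl(1/(-\log|w|^2),\,1\bigr)$-type weights, up to bounded factors, and its growth is at most polynomial in $\log|w|$. Hence the local potentials of the metric on $\mathcal O(1)$ differ from smooth ones by $O(\log\log(1/|w|))$ terms. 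The resulting singular metric is bounded above on $\mathbb P(V)$ and has zero Lelong numbers everywhere.

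\textbf{Step 3: closed positive current and conclusion.} Because the potentials are locally bounded above and the singular set has codimension one, the curvature current extends across $\pi^{-1}(D)$ as a closed positive $(1,1)$-current $T\in c_1(\mathcal O(1))$. Its Lelong numbers vanish. By Demailly's regularization, a class containing a positive current with zero Lelong numbers everywhere is nef. Therefore $\mathcal O(1)$ is nef, and so $V$ is nef.

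\textbf{Main obstacle.} The hardest part is Step 2: the precise cusp asymptotics of the Bergman metric in toroidal coordinates. The goal there is an estimate of the form $|\varphi-\varphi_0|\le C\log\log(1/|w|)$ for the local potential $\varphi$, with $\varphi_0$ smooth. My first attempt would be a direct computation in the Siegel upper half-space model $\{\operatorname{Im} w_n>|z'|^2\}$, writing $w=e^{2\pi i w_n/\ell}$ and reading off the metric on the logarithmic frame.
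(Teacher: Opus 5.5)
The paper gives no argument of its own for this statement---it simply cites Cadorel's Theorem~3---and your outline is a correct reconstruction of that cited argument: the Bergman metric with negative bisectional curvature, semipositive curvature on $\mathcal O_{\mathbb P(V)}(1)$ over $\pi^{-1}(X)$, goodness at the cusps via the Siegel model, extension to a closed positive current with zero Lelong numbers, and Demailly's criterion. One correction in Step~2: in the frame $(\partial_{z'},w\partial_w)$ the Siegel-domain computation gives weights comparable to $\operatorname{diag}\bigl(1/\log(1/|w|),\,1/\log^2(1/|w|)\bigr)$, because the normal direction is Poincar\'e-like rather than bounded; these are still two-sided polynomial bounds in $\log(1/|w|)$, so your conclusions (potential bounded above, within $O(\log\log(1/|w|))$ of a smooth one, vanishing Lelong numbers) go through unchanged.
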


\begin{proof}
This is \cite[Theorem~3]{Cadorel}.
\end{proof}

Section~\ref{sec:preliminaries} will reduce the main problem to finite level Shimura variety and a ball quotient case. After obtaining the nefness results in each case the product nefness will follow. The logarithmic characteristic-cycle criterion of Arapura--Patel \cite{ArapuraPatel} and finite \'etale
invariance will give the desired result on $\mathcal X_\Gamma$. 

Nefness results on finite level Shimura setting is proved using the adjoint variation of Hodge structure and positivity results on its canonical extension.  Together with Theorem~\ref{thm:cadorel-ball}, it supplies the two geometric inputs in
the preceding reduction.  The final part of the paper applies the resulting Euler-characteristic inequalities to moduli spaces admitting locally symmetric period descriptions.

\section{Preliminaries and the reduction theorem}
\label{sec:preliminaries}

\subsection{The PEC property and the logarithmic criterion}
\label{subsec:PEC}

Let $U$ be a complex algebraic variety and let
$K\in D^b_c(U,\Q)$. We have
\[
  \chi(U,K)
  =
  \sum_{j\in\mathbb Z}(-1)^j
  \dim_{\Q}\mathbb H^j(U,K).
\]

We say that $U$ has the perverse Euler-characteristic property, or
briefly the PEC property, if
\begin{equation}
  \chi(U,P)\geq0
  \qquad
  \text{for every }P\in\Perv(U,\Q).
  \label{eq:PEC-variety}
\end{equation}

The following logarithmic criterion will be used throughout the paper.

\begin{theorem}[Arapura--Patel lemma 2.2 \cite{ArapuraPatel}]
\label{thm:AP}
Let
\[
  U=\overline U\setminus D,
\]
where $\overline U$ is a smooth projective complex variety and $D$ is a
simple normal-crossings divisor.  If
\[
  \Omega^1_{\overline U}(\log D)
\]
is nef, then $U$ has the PEC property; equivalently,
\[
  \chi(U,P)\geq0
  \qquad
  \text{for every }P\in\Perv(U,\Q).
\]
\end{theorem}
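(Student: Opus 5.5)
The statement is the Arapura--Patel criterion, so the plan is to reproduce their characteristic-cycle argument in the logarithmic setting. We work on the smooth projective variety $\overline U$. Let $n=\dim U$ and put
\[
E=\Omega^1_{\overline U}(\log D),\qquad \pi\colon \mathbb V(E)=\operatorname{Spec}\operatorname{Sym}(E^\vee)\to\overline U .
\]
The total space $\mathbb V(E)$ contains $T^*U$ as the open subset $\pi^{-1}(U)$.

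\textbf{Step 1: reduce to an intersection number.} Given $P\in\Perv(U,\Q)$, its characteristic cycle $CC(P)=\sum_i m_i\,[T^*_{Z_i}U]$ is a Lagrangian cycle in $T^*U$. By Kashiwara's index theorem (in the Dubson--Kashiwara form), the multiplicities $m_i$ are nonnegative because $P$ is perverse. Take the closure $\overline{CC}(P)$ of this cycle in $\mathbb V(E)$. This is an $n$-cycle, still effective. The key identity to establish is the logarithmic index formula
\[
\chi(U,P)=\deg\bigl(\overline{CC}(P)\cdot[\text{zero section}]\bigr),
\]
where the intersection is taken in $\mathbb V(E)$. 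I would prove it by the standard method: choose a log $1$-form, i.e.\ a section of $E$ (after twisting, or by deforming), and use the fact that a logarithmic form that is nonvanishing near $D$ in the log sense gives no contribution from the boundary. Equivalently, cite the log index theorem of Ginsburg or Sabbah, which computes $\chi(U,P)=\chi(\overline U,Rj_*P)$ via closures in the log cotangent bundle.

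\textbf{Step 2: positivity.} Compactify to $\mathbb P(E\oplus\mathcal O)$. The self-intersection of the zero section is then expressed through the Segre and Chern classes of $E$. For an effective cycle $\overline{Z}$ of dimension $n$, the intersection with the zero section equals $\int_{\overline Z}$ of the top Chern class of $\pi^*E$, i.e.\ $c_{n}$ of the tautological bundle on $\overline{\mathbb P(Z)}$ paired with the appropriate class. The fact to use is that for a nef bundle $E$, the pullback $\pi^*E$ restricted to any subvariety of the projective completion has nonnegative top Chern numbers in the relevant degree. This follows from Fulton--Lazarsfeld positivity: Chern classes of nef bundles intersect any subvariety nonnegatively. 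Concretely, each irreducible component $\overline{Z_i}$ of dimension $n$ contributes
\[
\int_{\overline{Z_i}} c_n(\pi^*E)\ \ge 0,
\]
using that the zero section is the zero locus of the tautological section of $\pi^*E$. With $m_i\ge0$, summing gives $\chi\ge0$.

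\textbf{Main obstacle.} The hardest step is the boundary behaviour in Step 1, namely that taking closures in the log cotangent bundle yields exactly $\chi(U,P)$ with no correction terms. Here I would follow Arapura--Patel and, for non-proper $Z_i$, control the degeneration of the closure via the properness of $\overline{CC}(P)$ over $\overline U$. A secondary technical point is Step 2's use of nefness on singular $\overline{Z_i}$, which is handled by passing to resolutions, since nefness pulls back.
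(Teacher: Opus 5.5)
Your architecture is the paper's: effectivity of $CC(P)$, closure in the total space of $E=\Omega^1_{\overline U}(\log D)$, the logarithmic index formula $\chi(U,P)=[\overline{CC(P)}]\cdot[s_0(\overline U)]$, and positivity of $n$-cycles against the zero section of a nef bundle. The paper takes the index formula from \cite[Theorem~1.6]{WuZhou}, which is the precise statement you need, so citing it is fine.

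The gap is in Step~2: the mechanism you give computes the wrong number. The zero section is the zero locus of the tautological section of $\pi^*E$ only on the non-compact $\mathbb V(E)$. That section does not extend over $\mathbb P(E\oplus\mathcal O)$. So $\int c_n(\pi^*E)$ over the projective closure $\widetilde Z$ of a component is not $\overline Z\cdot s_0$, and on the non-proper $\overline Z$ itself Fulton--Lazarsfeld does not apply. A concrete failure is $P=\Q_x$. Here $CC(P)=[T^*_xU]$, whose closure in $\mathbb P(E\oplus\mathcal O)$ is a $\mathbb P^n$ lying over $x$. On it $\pi^*E$ is trivial, so your integral is $0$, while $\chi(U,\Q_x)=1$. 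Your formula is right only for the component $T^*_UU$, whose closure is the compact zero section. Every other component meets the divisor at infinity, and that is where the missing contributions live.

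The missing idea is the one behind the cone-positivity result the paper invokes, \cite[Example~8.2.6]{LazarsfeldPositivityII} (see also Fulton, \emph{Intersection Theory}, \S12.1). On $\mathbb P(E\oplus\mathcal O)$ set $Q=\pi^*(E\oplus\mathcal O)/\mathcal O(-1)$. The image of $(0,1)$ is a section of $Q$ vanishing transversally exactly along the zero section. Hence for any $n$-dimensional $\overline Z\subset\mathbb V(E)$ with closure $\widetilde Z$,
\[
\overline Z\cdot s_0=\int_{\widetilde Z}c_n(Q)=\int_{\widetilde Z}\sum_{i=0}^{n}\pi^*c_i(E)\,h^{n-i},\qquad h=c_1(\mathcal O(1)).
\]
The localized class is supported on the proper set $\overline Z\cap s_0$. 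The terms with $i<n$ are exactly what your formula drops; in the fiber example, $\int_{\mathbb P^n}h^n=1$. Now $Q$ is nef, being a quotient of the nef bundle $\pi^*(E\oplus\mathcal O)$. So $\int_{\widetilde Z}c_n(Q)\ge 0$ by Fulton--Lazarsfeld, after resolving $\widetilde Z$ if you wish. With this replacement, summing over the effective components gives $\chi(U,P)\ge0$, and your argument becomes the paper's.
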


Let $\mathcal Y$ be a finite-type complex Deligne--Mumford stack. For a complex Deligne--Mumford stack, all constructible complexes and
perverse sheaves below are taken on the underlying analytic stack with
\(\Q\)-coefficients. Following
\cite[\S2.6]{ArapuraPatel}, we write
\[
  \chiorb(\mathcal Y,K)\in\Q
\]
for the Euler--Satake characteristic of
$K\in D^b_c(\mathcal Y,\Q)$.  We say that $\mathcal Y$ has the PEC property if
\begin{equation}
  \chiorb(\mathcal Y,P)\geq0
  \qquad
  \text{for every }P\in\Perv(\mathcal Y,\Q).
  \label{eq:PEC-stack}
\end{equation}
When $\mathcal Y$ is a variety, its Euler--Satake characteristic is the
ordinary Euler characteristic
\cite[Theorem~2.5\textup{(i)}]{ArapuraPatel}.  More generally, if
\[
  \mathcal Y=[Y/F]
\]
for a finite group $F$ acting on a variety $Y$, and
$q\colon Y\to\mathcal Y$ is the quotient atlas, then
\begin{equation}
  \chiorb(\mathcal Y,K)
  =
  \frac{1}{|F|}\chi(Y,q^*K).
  \label{eq:quotient-stack-Euler-Satake}
\end{equation}
This is the finite \'etale degree formula of
\cite[Theorem~2.5\textup{(iii)}]{ArapuraPatel}.

We use the perverse $t$-structure on Deligne--Mumford stacks in its
smooth-local form; see \cite[\S4]{LaszloOlsson}.  Thus, if
$u\colon U\to\mathcal Y$ is a smooth atlas of pure relative dimension $e$,
then $P$ is perverse on $\mathcal Y$ if and only if $u^*P[e]$ is perverse on
$U$.  In particular, pullback by a representable \'etale morphism is
$t$-exact.

All locally symmetric Deligne--Mumford stacks considered below admit
representable finite \'etale atlases by varieties, by the quotient
presentation above.  The substacks occurring in the applications inherit such atlases by base change. We record finite
\'etale invariance in this setting.

\begin{proposition}[Finite \'etale invariance of PEC]
\label{prop:PEC-finite-etale}
Let
\[
  f\colon\mathcal Y\longrightarrow\mathcal X
\]
be a representable finite \'etale surjective morphism of finite-type complex
Deligne--Mumford stacks of constant degree $d$.  Assume that $\mathcal X$
admits a representable finite \'etale atlas by a variety.  Then
\[
  \mathcal X\text{ has PEC}
  \quad\Longleftrightarrow\quad
  \mathcal Y\text{ has PEC}.
\]
\end{proposition}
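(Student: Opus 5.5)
The plan is to prove both implications using two facts: the finite \'etale degree formula \cite[Theorem~2.5\textup{(iii)}]{ArapuraPatel}, which gives $\chiorb(\mathcal Y,f^*K)=d\cdot\chiorb(\mathcal X,K)$, and the adjunction $(f^*, f_*)$, where $f_*=f_!$ because $f$ is finite. Since $f$ is representable and \'etale, $f^*$ is $t$-exact for the perverse $t$-structure, as recorded above. Since $f$ is finite and \'etale, $f_*$ is also $t$-exact. To see this, pull back to the finite \'etale atlas $x\colon X\to\mathcal X$. Base change gives a finite \'etale map of varieties $f_X\colon Y_X\to X$, and $x^*f_*\simeq f_{X*}x_Y^*$. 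On varieties, a finite morphism has $f_{X*}$ right $t$-exact, and an \'etale (quasi-finite) morphism has it left $t$-exact; for $f_{X*}=f_{X!}$ both hold. Perversity is tested smooth-locally, so $f_*$ preserves perverse objects on the stacks.

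For (PEC for $\mathcal X$) $\Rightarrow$ (PEC for $\mathcal Y$), take $P\in\Perv(\mathcal Y,\Q)$. Then $f_*P\in\Perv(\mathcal X,\Q)$, so it suffices to show $\chiorb(\mathcal X,f_*P)=\chiorb(\mathcal Y,P)$. We would use the hypercohomology identity $\mathbb H^*(\mathcal X,f_*P)=\mathbb H^*(\mathcal Y,P)$ for a representable finite map, together with the fact that the Euler--Satake characteristic is compatible with pushforward along such maps. This compatibility can be checked on the atlas using formula \eqref{eq:quotient-stack-Euler-Satake}. If $\mathcal X=[X/F]$, then $\mathcal Y=[Y_X/F]$ with $Y_X=X\times_{\mathcal X}\mathcal Y$ a variety. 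This gives
\[
\chiorb(\mathcal X,f_*P)=\tfrac1{|F|}\chi(X,f_{X*}x_Y^*P)=\tfrac1{|F|}\chi(Y_X,x_Y^*P)=\chiorb(\mathcal Y,P).
\]
In the general case of a representable finite \'etale atlas (not necessarily Galois), we would first pass to a Galois closure to reduce to the quotient presentation. Alternatively, we would use \cite[Theorem~2.5]{ArapuraPatel} directly as the defining degree-multiplicativity of $\chiorb$ on a finite \'etale atlas.

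For the converse, take $Q\in\Perv(\mathcal X,\Q)$. Then $f^*Q$ is perverse, and $\chiorb(\mathcal X,Q)=\tfrac1d\chiorb(\mathcal Y,f^*Q)\ge0$. This direction uses only the degree formula and $t$-exactness of $f^*$, and it is where constancy of the degree $d$ enters.

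I expect the main obstacle to be the first direction. It requires the identity $\chiorb(\mathcal X,f_*P)=\chiorb(\mathcal Y,P)$ and the perversity of $f_*P$ on a stack that need not be a global quotient. The fix is to reduce everything to the given representable finite \'etale atlas and, if necessary, its Galois closure. A Galois closure exists because a finite \'etale cover of a finite-type variety can be dominated by a Galois one on each connected component. On the resulting quotient presentation, both the degree formula and the base-change identities become statements about varieties.
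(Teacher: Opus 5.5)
Your proposal is correct and follows essentially the same route as the paper. One direction is pullback plus the degree formula. For the other, you show $Rf_*Q$ is perverse via base change to the atlas and $t$-exactness of finite pushforward, then prove $\chiorb(\mathcal Y,Q)=\chiorb(\mathcal X,Rf_*Q)$ by computing on the atlas.

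The only difference is that the paper applies the degree formula directly to the possibly non-Galois atlas $u\colon U\to\mathcal X$ and its base change $v$. This gives $m\,\chiorb(\mathcal Y,Q)=\chi(V,v^*Q)=\chi(U,u^*Rf_*Q)=m\,\chiorb(\mathcal X,Rf_*Q)$, so no Galois closure is needed; this is exactly the alternative you mention.
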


\begin{proof}
Suppose first that $\mathcal Y$ has PEC, and let
$P\in\Perv(\mathcal X,\Q)$.  Since $f$ is \'etale, $f^*P$ is perverse.  The
finite \'etale degree formula
\cite[Theorem~2.5\textup{(iii)}]{ArapuraPatel} gives
\[
  \chiorb(\mathcal Y,f^*P)
  =
  d\,\chiorb(\mathcal X,P).
\]
The left-hand side is nonnegative, so
$\chiorb(\mathcal X,P)\geq0$.

Conversely, suppose that $\mathcal X$ has PEC, and let
$Q\in\Perv(\mathcal Y,\Q)$. We can choose a finite \'etale surjective atlas $u:U\to \mathcal X$ of degree $m$ with $U$ a variety. Consider the cartesian square
\[
\begin{tikzcd}
V:=U\times_{\mathcal X}\mathcal Y
  \arrow[r,"v"]
  \arrow[d,"f'"']
&
\mathcal Y
  \arrow[d,"f"]
\\
U
  \arrow[r,"u"']
&
\mathcal X .
\end{tikzcd}
\]
Since \(f\) is representable, \(V\) is an algebraic space, and since
\(f'\colon V\to U\) is finite, \(V\) is in fact a variety.  Moreover,
\(f'\) is finite \'etale of degree \(d\), while \(v\) is finite \'etale
of degree \(m\), being the respective base changes of \(f\) and \(u\). By the finite \'etale degree
formula, proper base change, and the usual equality of Euler characteristics
under finite pushforward, one has
\begin{align*}
  m\,\chiorb(\mathcal Y,Q)
  &=\chi(V,v^*Q)\\
  &=\chi(U,Rf'_*v^*Q)\\
  &=\chi(U,u^*Rf_*Q)\\
  &=m\,\chiorb(\mathcal X,Rf_*Q).
\end{align*}
Now $Rf_*Q\in \Perv(\mathcal X,\Q)$. To see this, we need to show that $u^*Rf_*Q$ is perverse on $U$ a variety. Now $f$ is finite, and thus proper. We can use proper base change which says $u^*Rf_*Q\simeq Rf'_*v^*Q$. Since $v$ is \'etale $v^*Q$ is perverse on $V$ which is a variety and as $f'$ is finite we get that $Rf'_*v^*Q\in \Perv(U,\Q)$.
Therefore
\[
  \chiorb(\mathcal Y,Q)
  =
  \chiorb(\mathcal X,Rf_*Q)
  \geq0,
\]
and $\mathcal Y$ has PEC.
\end{proof}

\subsection{The common-cover reduction theorem}
\label{subsec:common-cover}

We reduce to a finite \'etale cover which is a product of neat locally
symmetric varieties.

\begin{proposition}
\label{prop:neat-product-cover}
Let \(\mathcal D\) be a Hermitian symmetric domain of noncompact type and let $\Gamma\subset G_{\mathcal D}=\operatorname{Aut}(\mathcal D)^\circ$ be a lattice.  There exist decompositions
$$
  \mathcal D=\mathcal D_1\times\cdots\times\mathcal D_r,
  \qquad
  \Lambda=\Lambda_1\times\cdots\times\Lambda_r\subset\Gamma,
$$
where \(\Lambda\) has finite index in \(\Gamma\) and each \(\Lambda_i\)
is neat, such that, with
$$
  Y_i=\Lambda_i\backslash\mathcal D_i,
  \qquad
  Y=Y_1\times\cdots\times Y_r,
$$
the variety \(Y\) is smooth and quasi-projective and the natural map
$$
  Y\longrightarrow[\mathcal D/\Gamma]
$$

is representable, finite \'etale, and surjective. Each \(Y_i\) is either an arithmetic locally symmetric variety or a finite-volume complex-ball quotient with unipotent parabolic elements.
\end{proposition}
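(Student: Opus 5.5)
The plan is to combine the de Rham decomposition of $\mathcal D$ with the Borel--Harish-Chandra/Margulis theory of lattices in semisimple groups. First write $G_{\mathcal D}=\operatorname{Aut}(\mathcal D)^\circ$ as the product of its simple factors and $\mathcal D$ as the product of the corresponding irreducible Hermitian symmetric domains. Since $G_{\mathcal D}$ has trivial center and no compact factors, the decomposition theorem for lattices gives a finite-index subgroup $\Gamma'\subset\Gamma$ and a partition of the simple factors into blocks $G_{\mathcal D}=G_1\times\cdots\times G_r$ with the following two properties. First, $\Gamma_i:=\Gamma'\cap G_i$ is an irreducible lattice in $G_i$. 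Second, $\Gamma_1\times\cdots\times\Gamma_r$ has finite index in $\Gamma'$, and hence in $\Gamma$. Setting $\mathcal D_i$ to be the symmetric space of $G_i$ gives $\mathcal D=\mathcal D_1\times\cdots\times\mathcal D_r$.

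Next I sort the blocks into two cases. Suppose $G_i$ has real rank $\geq2$, or $G_i$ is not simple (so the block contains at least two factors). Margulis arithmeticity then makes $\Gamma_i$ arithmetic; for a reducible block one uses the irreducible-lattice version of the theorem. The remaining blocks are simple of real rank one. A Hermitian rank-one simple group is locally $PU(n,1)$, so $\mathcal D_i\cong\mathbb B^{n_i}$. Such lattices may be nonarithmetic, and they form the ball-quotient case.

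I then pass to neat subgroups. For arithmetic $\Gamma_i$, Borel's theorem provides a neat finite-index subgroup, and $Y_i=\Lambda_i\backslash\mathcal D_i$ is a smooth quasi-projective variety by Baily--Borel. To call it an ``arithmetic locally symmetric variety'' in the Shimura sense, I would realize $\mathcal D_i$ as a component of a Shimura datum and $\Lambda_i$ as a congruence subgroup, shrinking further if needed. In the ball case, Selberg's lemma applies because $\Gamma_i$ is finitely generated and linear. I shrink to a torsion-free finite-index $\Lambda_i$ whose parabolic elements are unipotent. One can arrange this by requiring all eigenvalues to lie in a torsion-free subgroup of $\C^\times$: a parabolic element's eigenvalues of modulus one are then roots of unity lying in a torsion-free group, hence equal to $1$. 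Since $\Lambda_i$ is finite index in a finitely generated linear group, it is neat after a further finite-index passage. Quasi-projectivity of $Y_i$ follows from Baily--Borel in the arithmetic case and from Mok's or Siu--Yau's compactification in the ball case.

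Set $\Lambda=\prod\Lambda_i$, which has finite index in $\Gamma$. Replace it by its normal core $\Lambda^\circ$ in $\Gamma$; this is still finite index but need no longer be a product, so I instead take $\Lambda_i'=\Lambda^\circ\cap G_i$, whose product has finite index in $\Lambda^\circ$. I would rather avoid normality altogether. The map $Y=\Lambda\backslash\mathcal D\to[\mathcal D/\Gamma]$ is simply $[\mathcal D/\Lambda]\to[\mathcal D/\Gamma]$ induced by a finite-index inclusion, and this is always representable and finite étale of degree $[\Gamma:\Lambda]$. Indeed, its fiber product with $\mathcal D\to[\mathcal D/\Gamma]$ is $\mathcal D\times\Gamma/\Lambda$, which covers $\mathcal D$ with finite discrete fibers. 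Surjectivity is clear. Finally, $Y\cong\prod Y_i$ because $\Lambda$ acts factorwise and freely, and $Y$ is smooth since $\Lambda$ is torsion-free.

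The main obstacle is the arithmeticity and decomposition step. Specifically, one must verify that the finite-index product decomposition holds in the nonarithmetic rank-one factors, and that the arithmetic blocks are genuinely Shimura-type. I would first try to cite the standard reducibility criterion (Raghunathan, Thm.~5.22), then Margulis, and treat the Shimura realization by choosing $\mathbb Q$-forms via the arithmetic structure.
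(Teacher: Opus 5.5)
Your proposal is correct and follows essentially the paper's route. The shared steps are: the finite-index product decomposition into irreducible lattices, which holds for arbitrary lattices in $G_{\mathcal D}$ and so is not an obstacle in the nonarithmetic factors; Margulis arithmeticity for blocks of real rank $\geq 2$, where non-simple blocks automatically have real rank $\geq 2$; rank-one Hermitian factors being balls; neat finite-index subgroups, with Baily--Borel or Mok for quasi-projectivity; and the observation that $[\mathcal D/\Lambda]\to[\mathcal D/\Gamma]$ pulls back along the atlas to $[\Gamma:\Lambda]$ copies of $\mathcal D$.

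Two minor corrections. First, parabolic eigenvalues are roots of unity not because of the torsion-free eigenvalue condition. The reason is that cusp stabilizers of a finite-volume lattice are virtually contained in the unipotent radical; this is what the paper imports from \cite[\S A.1]{Deng} to get ``neat $\Rightarrow$ unipotent parabolics.'' Second, the Shimura/congruence realization you worry about is not needed, since the statement only asks for arithmetic quotients. It also could not be achieved by shrinking a noncongruence group.
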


\begin{proof}
The group \(G_{\mathcal D}\) is adjoint semisimple and has no compact
factors.  By \cite[Proposition~4.3.3]{WitteMorris}, there is a
decomposition $G_{\mathcal D}=G_1\times\cdots\times G_r$ such that
$$
  \Gamma'
  =\Gamma_1\times\cdots\times\Gamma_r
  \subset\Gamma,
  \qquad
  \Gamma_i=\Gamma\cap G_i,
$$

has finite index and each \(\Gamma_i\) is an irreducible lattice.  The
corresponding decomposition of the symmetric space is
\(\mathcal D=\prod_i\mathcal D_i\).

If \(\operatorname{rank}_{\mathbb R}G_i\geq2\), then \(\Gamma_i\) is
arithmetic by Margulis arithmeticity
\cite[Theorem~1]{MargulisArithmeticity}.  Thus a nonarithmetic
\(\Gamma_i\) has real rank one; since \(\mathcal D_i\) is Hermitian,
the classification of irreducible Hermitian symmetric spaces gives $\mathcal D_i\simeq\mathbb B^{n_i};$ see \cite[Chapters~VIII and~X]{Helgason}.

For an arithmetic factor choose a neat finite-index subgroup
\(\Lambda_i\subset\Gamma_i\) using
\cite[Proposition~3.5]{MilneShimura}; its quotient is smooth and
quasi-projective by the Baily--Borel theorem
\cite{BailyBorel}.  For a nonarithmetic ball factor, choose a finite index neat subgroup \(\Lambda_i\subset\Gamma_i\), such a subgroup exists by \cite[Remark~A.2]{Deng}. Neatness implies that \(\Lambda_i\) is torsion-free and that all of its parabolic elements are unipotent; see \cite[\S A.1]{Deng}. The quotient \(\Lambda_i\backslash\mathbb B^{n_i}\) is smooth and quasi-projective by \cite{Mok}.

Hence $\Lambda=\Lambda_1\times\cdots\times\Lambda_r $
is finite index and
$$
  Y=\Lambda\backslash\mathcal D
   =\prod_{i=1}^r\Lambda_i\backslash\mathcal D_i
$$
is a smooth quasi-projective variety.  Finally, the inclusion
\(\Lambda\subset\Gamma\) induces
$$
  [\mathcal D/\Lambda]\longrightarrow[\mathcal D/\Gamma].
$$

After base change by the atlas
\(\mathcal D\to[\mathcal D/\Gamma]\), this is a disjoint union of
\([\Gamma:\Lambda]\) copies of \(\mathcal D\).  It is therefore
representable, finite \'etale, and surjective.  Since \(\Lambda\) is
neat,
$$
  [\mathcal D/\Lambda]\simeq Y,
$$

which proves the proposition.
\end{proof}

\begin{corollary}[PEC on the neat product cover]
\label{cor:PEC-neat-product-cover}
With the notation above,

$$
  [\mathcal D/\Gamma]\text{ has PEC}
  \quad\Longleftrightarrow\quad
  Y\text{ has PEC}.
$$

\end{corollary}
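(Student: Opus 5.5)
This is a direct application of Proposition~\ref{prop:PEC-finite-etale} to the morphism $f\colon Y\to[\mathcal D/\Gamma]$ constructed in Proposition~\ref{prop:neat-product-cover}. The plan is to check the hypotheses of that proposition one at a time and then read off the equivalence.

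First, $f$ is representable, finite \'etale and surjective by Proposition~\ref{prop:neat-product-cover}. Both $Y$ and $\mathcal X_\Gamma=[\mathcal D/\Gamma]$ are finite-type complex Deligne--Mumford stacks. For $Y$ this holds because it is a smooth quasi-projective variety. For $\mathcal X_\Gamma$ it holds because it is a quotient of a variety by a finite group, as explained in the next paragraph.

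The degree is constant. After base change along the atlas $\mathcal D\to[\mathcal D/\Gamma]$, the map becomes a disjoint union of $[\Gamma:\Lambda]$ copies of $\mathcal D$, so $d=[\Gamma:\Lambda]$ everywhere.

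The remaining hypothesis is that $\mathcal X_\Gamma$ admits a representable finite \'etale atlas by a variety. Here I would pass to a normal subgroup. Replace $\Lambda$ by its normal core $\Lambda'=\bigcap_{g\in\Gamma/\Lambda}g\Lambda g^{-1}$. This subgroup is still of finite index and neat, since subgroups of neat groups are neat. Then $Y'=\Lambda'\backslash\mathcal D$ is a finite \'etale cover of $Y$, hence again a smooth quasi-projective variety. Moreover
\[
  [\mathcal D/\Gamma]\simeq\bigl[Y'/(\Gamma/\Lambda')\bigr],
\]
so $Y'\to[\mathcal D/\Gamma]$ is a representable finite \'etale atlas by a variety, exactly as required.

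The main point needing care is algebraicity. The quotient $[\mathcal D/\Gamma]$ is defined analytically, so one has to know that it is the analytification of the algebraic quotient stack $[Y'/(\Gamma/\Lambda')]$. This follows because the finite group $\Gamma/\Lambda'$ acts on $Y'$ by algebraic automorphisms. In the arithmetic factors this is Borel's extension theorem; in the ball factors, by Mok, biholomorphisms of quasi-projective ball quotients are algebraic. With this in place, Proposition~\ref{prop:PEC-finite-etale} gives $[\mathcal D/\Gamma]$ has PEC if and only if $Y$ has PEC.
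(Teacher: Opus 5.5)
Your proposal is correct and follows the paper's argument: the paper's proof is the one-line application of Proposition~\ref{prop:PEC-finite-etale} to $Y\to[\mathcal D/\Gamma]$, and you additionally check hypotheses the paper leaves implicit, namely constant degree $[\Gamma:\Lambda]$, a finite \'etale atlas by a variety, and algebraicity of the stack. One small repair: the normal core $\Lambda'$ need not be a product, so your factor-by-factor appeal to Borel and Mok does not make sense as stated; replace $\Lambda'$ by $\prod_i(\Lambda'\cap G_i)$, which is still normal of finite index in $\Gamma$, and on whose quotient each element of $\Gamma$ acts factor by factor.
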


\begin{proof}
The map

$$
  Y\longrightarrow[\mathcal D/\Gamma]
$$

is finite \'etale and surjective, so the assertion follows from
Proposition~\ref{prop:PEC-finite-etale}.
\end{proof}

\subsection{Toroidal compactifications and products}
\label{subsec:toroidal-products}

We finish the section by recording the compactifications of the two building
blocks and the product formula for their logarithmic cotangent bundles.

\begin{proposition}[Compactifications of the building blocks]
\label{prop:building-block-compactifications}
Let $X_i$ be a factor appearing in
proposition~\ref{prop:neat-product-cover}.  Then $X_i$ admits a smooth
projective compactification
\[
  X_i=\overline X_i\setminus D_i
\]
for which $D_i$ is a simple normal-crossings divisor.  More precisely:
\begin{enumerate}
  \item if $X_i$ is of arithmetic type (Shimura), one may choose a regular projective
  admissible rational polyhedral cone decomposition and take the associated
  toroidal compactification;
  \item if $X_i$ is of ball quotient type, one may take the smooth projective toroidal compactification obtained by filling in the cusps;
  \item if $X_i$ is compact, one takes
  $\overline X_i=X_i$ and $D_i=\varnothing$.
\end{enumerate}
\end{proposition}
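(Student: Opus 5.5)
We split into the three cases listed in the statement and, in each, cite the standard existence theorem for a smooth projective compactification whose boundary is a simple normal-crossings divisor. The only extra work is upgrading ``normal crossings'' to ``simple normal crossings'' and checking projectivity.

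\emph{Case (1), arithmetic factor.} Here $X_i=\Lambda_i\backslash\mathcal D_i$ with $\Lambda_i$ a neat arithmetic subgroup. We would invoke the theory of Ash--Mumford--Rapoport--Tai. For neat $\Lambda_i$, any $\Lambda_i$-admissible collection of rational polyhedral cone decompositions that is regular yields a toroidal compactification $\overline X_i$ that is a smooth compact algebraic space. Its boundary $D_i$ is a divisor with normal crossings.

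Existence of such a decomposition that is also projective, meaning it admits a strictly convex $\Lambda_i$-invariant polarization function, is proved in AMRT Chapter~IV; in that case $\overline X_i$ is projective.

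To get \emph{simple} normal crossings, we pass to a further refinement of the cone decomposition, for instance by barycentric subdivisions as in Faltings--Chai or Pink. After refinement, no cone has two faces identified by $\Lambda_i$, so each boundary component is smooth and has no self-intersection. Refinement preserves regularity and projectivity, since one can add a small multiple of a strictly convex function adapted to the subdivision.

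\emph{Case (2), nonarithmetic ball quotient.} Here $\Lambda_i$ is neat, so its parabolic elements are unipotent. The cusps are finitely many by finiteness of volume. We would use Mok's construction together with Hummel--Schroeder: a neighbourhood of each cusp is a quotient of a horoball by a Heisenberg-type lattice $N\cap\Lambda_i$. Because the parabolics are unipotent, this neighbourhood compactifies by adding a single smooth divisor $E$, namely an abelian variety, with normal bundle negative. The divisors attached to distinct cusps are disjoint, so $D_i=\bigsqcup E_c$ is a smooth divisor, and in particular simple normal crossings.

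Projectivity follows from Mok's result that the compactification is projective, which is the input already cited in Proposition~\ref{prop:neat-product-cover}. Alternatively, one can show that $K_{\overline X_i}+D_i$ is big and nef and that $K_{\overline X_i}+(1-\epsilon)D_i$ is ample for small $\epsilon>0$.

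\emph{Case (3), compact factor.} If $X_i$ is compact, it is a smooth projective variety by the Kodaira embedding theorem, since the Bergman metric makes $K_{X_i}$ positive. We take $D_i=\varnothing$.

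The main obstacle is the simple normal crossings requirement in case (1). We plan to handle it by citing the refinement results (Pink's thesis, \S9; or Faltings--Chai, Chapter~IV) rather than reproving them.
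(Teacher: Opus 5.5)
Your proposal is correct and follows essentially the same route as the paper. In the arithmetic case you use AMRT toroidal compactifications attached to a regular projective admissible cone decomposition; in the ball-quotient case, Mok's smooth projective compactification with boundary a disjoint union of abelian varieties; and in the compact case, $D_i=\varnothing$. Your explicit refinement step (e.g.\ barycentric subdivision) to pass from normal crossings to \emph{simple} normal crossings is a welcome addition, since regularity and neatness alone only give normal crossings, a point the paper's proof passes over quickly.
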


\begin{proof}
For a Shimura factor, this is the toroidal compactification theorem of
Ash-Mumford-Rapoport-Tai \cite{AMRT}.  After refining an admissible rational
polyhedral cone decomposition, it may be taken regular and projective.
Regularity and neatness give smoothness and simple normal-crossings boundary, while projectivity of the decomposition gives projectivity of the compactification, see \cite[Chapter~III]{AMRT}.

For a finite-volume complex-ball quotient whose parabolic elements are
unipotent, the toroidal compactification is smooth and projective, with boundary a disjoint union of abelian varieties as in \cite[Theorem~1]{Mok}. The compact case corresponds to empty boundary.
\end{proof}

\begin{lemma}
\label{lem:product-log-nef}
For $1\leq i\leq r$, let $\overline X_i$ be a smooth projective variety and
let $D_i\subset\overline X_i$ be a simple normal-crossings divisor.  Put
\[
  \overline X=\prod_{i=1}^r\overline X_i,
  \qquad
  D=\sum_{i=1}^r p_i^{-1}(D_i),
\]
where $p_i\colon\overline X\to\overline X_i$ is the projection.  Then
$\overline X$ is smooth and projective, $D$ is a simple normal-crossings
divisor, and
\begin{equation}
  \Omega^1_{\overline X}(\log D)
  \simeq
  \bigoplus_{i=1}^r
  p_i^*\Omega^1_{\overline X_i}(\log D_i).
  \label{eq:product-log-cotangent}
\end{equation}
In particular, if every
$\Omega^1_{\overline X_i}(\log D_i)$ is nef, then
$\Omega^1_{\overline X}(\log D)$ is nef.
\end{lemma}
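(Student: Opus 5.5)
We prove the lemma in three steps: smoothness and projectivity of the product, the simple normal-crossings property of $D$ via product coordinates, the splitting \eqref{eq:product-log-cotangent} via local frames, and finally nefness.

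\emph{Smoothness, projectivity, and the boundary.} A finite product of smooth projective varieties is smooth and projective: it is smooth over $\C$, and the Segre embedding makes it projective. For the boundary, work locally at a point $x=(x_1,\dots,x_r)\in\overline X$. Choose coordinates $z^{(i)}_1,\dots,z^{(i)}_{n_i}$ on $\overline X_i$ near $x_i$ such that $D_i=\{z^{(i)}_1\cdots z^{(i)}_{k_i}=0\}$. Their pullbacks form a coordinate system on $\overline X$ near $x$, and in these coordinates
\[
D=\Bigl\{\prod_{i}\prod_{a\le k_i}z^{(i)}_a=0\Bigr\}.
\]
So $D$ has normal crossings. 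Its irreducible components are the $p_i^{-1}(D_{i,a})$, which are smooth because each $D_{i,a}$ is smooth and $p_i$ is smooth. Hence $D$ is simple normal crossings.

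\emph{The splitting.} Pullback of logarithmic forms gives maps
\[
p_i^*\Omega^1_{\overline X_i}(\log D_i)\to\Omega^1_{\overline X}(\log D),
\]
since $p_i^{-1}(D_i)\subset D$ and $p_i^*(dz/z)=d(p_i^*z)/p_i^*z$. Summing over $i$ yields a map from the direct sum. In the coordinates above, $\Omega^1_{\overline X}(\log D)$ is free on the forms $dz^{(i)}_a/z^{(i)}_a$ for $a\le k_i$ and $dz^{(i)}_b$ for $b>k_i$. These are exactly the images of the local frames of the summands. So the map is locally an isomorphism, hence an isomorphism.

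\emph{Nefness.} Each $p_i^*\Omega^1_{\overline X_i}(\log D_i)$ is nef, since the pullback of a nef bundle is nef: on $\mathbb P$ of the bundle, $\mathcal O(1)$ pulls back along the induced map of projective bundles. A direct sum of nef bundles is nef by the standard fact, e.g.\ Lazarsfeld, Positivity II, Theorem 6.2.12. Alternatively, use the criterion that a bundle is nef iff every quotient line bundle of its pullback to any curve has nonnegative degree: a quotient line bundle of a direct sum restricts nontrivially to some summand, giving a nonzero map from a nef bundle to it, so its degree is nonnegative.

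The only step needing any care is the frame computation identifying the log forms, and it is routine.
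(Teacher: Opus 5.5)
Your proof is correct and follows essentially the paper's approach. The paper only says the splitting can be checked locally (or cites Arapura--Patel, Lemma~2.3) and leaves the nefness consequence implicit. Your product-coordinate computation of the log frames, together with the standard facts that pullbacks and direct sums of nef bundles are nef, is exactly that local check written out in full.
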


\begin{proof}
One can check it locally, or
\cite[Lemma~2.3]{ArapuraPatel}.
\end{proof}

\begin{corollary}
\label{cor:final-reduction}
To prove Theorem~\ref{thm:main}, it is enough to establish logarithmic
nefness for the following two classes:
\begin{enumerate}
  \item connected components of Shimura varieties at neat finite level;
  \item neat finite-volume complex-ball quotients with unipotent parabolic
  elements.
\end{enumerate}
More precisely, suppose that for every factor $X_i$ in
proposition\ref{prop:neat-product-cover} one can choose the compactification
of Proposition~\ref{prop:building-block-compactifications} so that
\[
  \Omega^1_{\overline X_i}(\log D_i)
\]
is nef.  Then $\mathcal X_\Gamma$ has the PEC property.
\end{corollary}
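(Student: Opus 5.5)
The argument is a chain through results already established in this section. Let \(Y=Y_1\times\cdots\times Y_r\to[\mathcal D/\Gamma]\) be the neat product cover of Proposition~\ref{prop:neat-product-cover}. By Corollary~\ref{cor:PEC-neat-product-cover}, it suffices to show that the smooth quasi-projective variety \(Y\) has the PEC property in the sense of \eqref{eq:PEC-variety}.

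For each factor \(Y_i\), I choose the compactification \(Y_i=\overline Y_i\setminus D_i\) of Proposition~\ref{prop:building-block-compactifications}. The hypothesis lets me choose it so that \(\Omega^1_{\overline Y_i}(\log D_i)\) is nef. In the compact case \(D_i=\varnothing\), so I need \(\Omega^1_{\overline Y_i}\) itself to be nef. I note that compact factors fall into the Shimura class (arithmetic) or the ball class, so they are already covered by the hypothesis. Now set
\[
\overline Y=\prod_i\overline Y_i,\qquad D=\sum_i p_i^{-1}(D_i).
\]
Lemma~\ref{lem:product-log-nef} then gives three things: \(\overline Y\) is smooth and projective, \(D\) is a simple normal-crossings divisor with \(\overline Y\setminus D=Y\), and \(\Omega^1_{\overline Y}(\log D)\simeq\bigoplus_i p_i^*\Omega^1_{\overline Y_i}(\log D_i)\). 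This bundle is nef, since pullbacks and finite direct sums of nef bundles are nef. The identification \(\overline Y\setminus D=\prod_i(\overline Y_i\setminus D_i)=Y\) is immediate from the definition of \(D\) as the union of the pulled-back boundaries.

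Theorem~\ref{thm:AP} then applies to \(Y=\overline Y\setminus D\), so \(\chi(Y,P)\ge0\) for all \(P\in\Perv(Y,\Q)\). Since \(Y\to[\mathcal D/\Gamma]\) is a representable finite étale surjection, Proposition~\ref{prop:PEC-finite-etale} transfers PEC to \(\mathcal X_\Gamma\). That proposition needs \([\mathcal D/\Gamma]\) to have a representable finite étale atlas by a variety, and \(Y\) itself provides one. It also needs constant degree: the degree is \([\Gamma:\Lambda]\), which is constant by the base-change description in the proof of Proposition~\ref{prop:neat-product-cover}.

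The only step that needs care is matching each factor of Proposition~\ref{prop:neat-product-cover} to one of the two classes (1) and (2). An arithmetic irreducible factor with neat \(\Lambda_i\) must be identified with a connected component of a Shimura variety at neat level, possibly after passing to a further finite-index congruence subgroup. That passage is harmless, again by Proposition~\ref{prop:PEC-finite-etale} together with the product structure. A nonarithmetic factor is a neat ball quotient with unipotent parabolics, which is exactly class (2).
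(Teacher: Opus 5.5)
Your argument for the precise (``more precisely'') statement is correct and is exactly the paper's proof: the neat product cover of Proposition~\ref{prop:neat-product-cover}, factorwise nef compactifications, Lemma~\ref{lem:product-log-nef}, Theorem~\ref{thm:AP} on the product, then transfer to $\mathcal X_\Gamma$ by Corollary~\ref{cor:PEC-neat-product-cover}. The one slip is in your last paragraph, which the paper does not attempt and the precise statement does not need, since its hypothesis already covers every factor: a non-congruence neat arithmetic $\Lambda_i$ has no finite-index congruence subgroup (such a subgroup would force $\Lambda_i$ to contain a principal congruence subgroup), so instead compare $Y_i$ with $\Gamma_i'\backslash\mathcal D_i$, for a neat congruence $\Gamma_i'$ commensurable with $\Lambda_i$, through the common cover $(\Lambda_i\cap\Gamma_i')\backslash\mathcal D_i$, using both directions of Proposition~\ref{prop:PEC-finite-etale} (or simply note that Theorem~\ref{thm:arithmetic-nef} is proved for arbitrary neat arithmetic quotients).
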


\begin{proof}
Let
\[
  X=X_1\times\cdots\times X_r
\]
be the product supplied by proposition~\ref{prop:neat-product-cover}.  Choose smooth projective toroidal
compactifications
\[
  X_i=\overline X_i\setminus D_i
\]
with nef logarithmic cotangent bundles, and put
\[
  \overline X=\prod_{i=1}^r\overline X_i,
  \qquad
  D=\sum_{i=1}^r p_i^{-1}(D_i).
\]
By Lemma~\ref{lem:product-log-nef},
\[
  \Omega^1_{\overline X}(\log D)
\]
is nef.  Theorem~\ref{thm:AP} therefore implies that $X$ has PEC.  By
Corollary~\ref{cor:PEC-neat-product-cover}, $\mathcal X_\Gamma$ has PEC as well.
\end{proof}

Theorems~\ref{thm:shimura-log-nef} and~\ref{thm:cadorel-ball} provide the two
factorwise nefness statements in Corollary~\ref{cor:final-reduction}.  Hence,
after this section, no further stack-theoretic or group-theoretic reduction
is needed.  The remaining geometric verification takes place entirely on
smooth neat quotients. So the boundary analysis reduces precisely to smooth noncompact connected components of finite-level Shimura varieties and smooth noncompact finite-volume complex-ball quotients.

\section{Nef logarithmic cotangent bundle}\label{sec:nefness}

\subsection{Arithmetic quotients}
\label{sec:arithmetic-quot}

Let \(X=\Gamma\backslash\mathcal D\) be a connected component of a
Shimura variety at neat finite level, and let
\[
  j\colon X\hookrightarrow\bar X
\]
be a smooth projective toroidal compactification with
\[
  D=\bar X\setminus X
\]
a simple normal-crossings divisor.
Fix a point $h \in \mathcal{D}$, we can write $\mathcal{D} = G/K_h = \mathbf{G}(\mathbb{R})^+/K_h$ where $\mathbf{G}$ is an adjoint rational algebraic group. Let $\mathbb{S} = \mathrm{Res}_{\mathbb{C}/\mathbb{R}} \bb{G}_m$ be the Deligne torus, then equivalently, one can view $\mathcal{D}$ as a $\mathbf{G}(\mathbb{R})^+$-conjugacy class of a homomorphism $h: \mathbb{S} \to \mathbf{G}_{\mathbb{R}}$. The adjoint complexified representation $\Rm{Ad} \circ h: \bb{S}_{\bb{C}} \to \Rm{Gl}(\fr{g}_{\bb{C}})$ induces a decomposition 
\[\fr{g}_{\bb{C}} = \fr{g}^{-1, 1}_h \oplus \fr{g}^{0, 0}_h \oplus \fr{g}^{1, -1}_h\]
with the convention that $\Rm{Ad}(h(z))$ acts by $z^{-p}\bar{z}^{p}$ on $\fr{g}_h^{p, -p}$. We can define the Hodge filtration 
\[F^{1}_h \fr{g}_{\bb{C}} = \fr{g}^{1, -1}_h, \tab F^0_h \fr{g}_{\bb{C}} = \fr{g}_h^{1, -1} \oplus \fr{g}_h^{0, 0}, \tab F^{-1}_h \fr{g}_{\bb{C}} = \fr{g}_{\bb{C}}\]
\tab For $x = g \cdot h \in \mathcal{D}$, the corresponding representation $h_x: \bb{S} \to \Rm{GL}(\fr{g}_{\bb{C}})$ is obtained from $h$ via conjugation by $g$. We then have the analogous decomposition $\fr{g}_{\bb{C}} = \fr{g}^{-1, 1}_x \oplus \fr{g}^{0, 0}_x \oplus \fr{g}^{1, -1}_x$ and filtration $F^\bullet_x \fr{g}_{\bb{C}}$. 
\subsubsection{The adjoint polarized variation}

Let $P_h = \{g \in \mathbf{G}_{\bb{C}}\vert\ \Rm{Ad}(g)F^p_h = F^p_h \text{ for all }p\}$ be the stabilizer of the filtration $F^\bullet_h$. Its Lie algebra is $\fr{p}_h = F^0_{h}\fr{g}_{\bb{C}}$, and we have the compact dual 
\[\check{\mathcal{D}} = \mathbf{G}_{\bb{C}}/P_h\]
which is a flag variety. The Borel embedding 
\[\beta: \mathcal{D} \to \check{\mathcal{D}}, \tab x \mapsto F_x^\bullet\]
is a $\mathbf{G}(\bb{R})^+$-equivariant holomorphic embedding which identifies $\mathcal{D}$ with an open orbit in $\check{\mathcal{D}}$. \\
\tab
Given \(W\) a finite-dimensional algebraic representation of \(P_h\), we can define a homogeneous vector bundle on \(\check{\sh{D}}\)
\[
  \check{\sh{W}}
  =
  \Bf{G}_{\bb{C}}\times^{P_h}W
  =
  (\Bf{G}_{\bb{C}}\times W)/
  \bigl((g,w)\sim(gg',(g')^{-1}w)\bigr),
\]
which is \(\Bf{G}_{\bb{C}}\)-equivariant. Let
\[
  \check{\sh{V}}
  =
  \Bf{G}_{\bb{C}}\times^{P_h}\fr{g}_{\bb{C}},
  \qquad
  \check{\sh{F}}^p
  =
  \Bf{G}_{\bb{C}}\times^{P_h}F_h^p\fr{g}_{\bb{C}}.
\]
Since the adjoint representation of \(P_h\) on
\(\fr{g}_{\bb{C}}\) extends to \(\Bf{G}_{\bb{C}}\), there is a
\(\Bf{G}_{\bb{C}}\)-equivariant isomorphism
\[
  \check{\sh{V}}
  \simeq
  \check{\sh{D}}\times\fr{g}_{\bb{C}},
  \qquad
  [g,v]\longmapsto
  \bigl(gP_h,\Rm{Ad}(g)v\bigr).
\]
Hence
\(
  \beta^*\check{\sh{V}}
  \simeq
  \sh{D}\times\fr{g}_{\bb{C}},
\)
and under this identification the \(\Gamma\)-action is
\[
  \gamma\cdot(x,v)
  =
  \bigl(\gamma x,\Rm{Ad}(\gamma)v\bigr).
\]
Thus the adjoint representation
\(\Rm{Ad}|_\Gamma:\Gamma\to\Rm{GL}(\fr{g}_{\bb{Q}})\)
defines the rational local system
\[
  \bb{V}_{\bb{Q}}
  =
  \Gamma\backslash
  \bigl(\sh{D}\times\fr{g}_{\bb{Q}}\bigr).
\]

The bundles
\(\beta^*\check{\sh{V}}\) and
\(\beta^*\check{\sh{F}}^p\) are
\(\Bf{G}(\bb{R})^+\)-equivariant, hence \(\Gamma\)-equivariant, and
therefore descend to an automorphic vector bundle \(\sh{V}\) on \(X\),
together with a filtration by automorphic subbundles
\(\sh{F}^p\subset\sh{V}\). Moreover,
\(
  \sh{V}
  \simeq
  \bb{V}_{\bb{Q}}\otimes_{\bb{Q}}\sh{O}_X.
\)
If
\[
  \pi\colon\sh{D}\longrightarrow X=\Gamma\backslash\sh{D}
\]
denotes the quotient map, then by construction
\[
  \pi^*\sh{V}
  \simeq
  \beta^*\check{\sh{V}},
  \qquad
  \pi^*\sh{F}^p
  \simeq
  \beta^*\check{\sh{F}}^p.
\]
Let \(\nabla\) be the flat connection on \(\sh{V}\) obtained by
descending the exterior derivative \(1\otimes\Rm{d}\). Then
\[
  (\bb{V}_{\bb{Q}},\sh{F}^{\bullet},\nabla)
\]
is a weight-zero rational variation of Hodge structures; see
\cite[Proposition~5.9]{MilneShimura}.
\\
\tab 
Now let $\kappa(v,w)=\operatorname{Tr}(\operatorname{ad}(v)\operatorname{ad}(w))$ be the Killing form on \(\mathfrak g_{\mathbb Q}\), and put
\(Q=-\kappa\). Then \(Q\) is rational, symmetric, nondegenerate, and
\(\operatorname{Ad}(\mathbf G)\)-invariant. Thus it descends to a flat
bilinear form on \(\mathbb V_{\mathbb Q}\).

The invariance of \(Q\) under \(h_x(\mathbb S)\) implies
\[
Q\bigl(\mathfrak g_x^{p,-p},\mathfrak g_x^{q,-q}\bigr)=0
\qquad\text{unless }p+q=0.
\]
Indeed, if \(v\in\mathfrak g_x^{p,-p}\) and
\(w\in\mathfrak g_x^{q,-q}\), then for every
\(z\in\mathbb C^\times\),
\[
Q(v,w)
=
z^{-(p+q)}\overline z^{\,p+q}Q(v,w),
\]
which forces \(Q(v,w)=0\) unless \(p+q=0\).
Since \(Q\) is nondegenerate, it therefore gives perfect pairings
\[
\mathfrak g_x^{p,-p}
\times
\mathfrak g_x^{-p,p}
\longrightarrow\mathbb C.
\]

Moreover, by the characterization of a Hermitian symmetric domain,
\[
C_x=\operatorname{Ad}(h_x(i))
\]
is a Cartan involution of \(\mathfrak g_{\mathbb R}\). Hence the real
symmetric form
\[
(v,w)\longmapsto Q(v,C_xw)=-\kappa(v,C_xw)
\]
is positive definite on \(\mathfrak g_{\mathbb R}\).
Equivalently, the associated Hodge Hermitian form is positive definite.
Thus \(Q\) polarizes the weight-zero variation of Hodge structure $(\bb{V}_{\bb{Q}},\sh{F}^\bullet,\nabla)$. \\
\\
\tab Now take the associated graded to get a system of Higgs bundles 
\[(E, \theta) = \Rm{gr}_{\sh{F}}(\sh{V}, \nabla), \tab \theta: E^{p, -p} \to E^{p - 1, - p + 1} \otimes \Omega^1_X\]
and $E^{-1, 1} = \Gamma\setminus (\beta^*(\Bf{G}_{\bb{C}} \times^{P_h} \Rm{gr}_{F_h}^{-1} \fr{g}_{\bb{C}}))$. At $h \in \sh{D} \subset \check{\sh{D}}$, the tangent space is 
\[T_h \sh{D} = T_h \check{\sh{D}} \simeq \fr{g}_{\bb{C}}/\fr{p}_h = \fr{g}_{\bb{C}}/F^0_h\fr{g}_{\bb{C}} \simeq \fr{g}_h^{-1, 1}\]
and the same is true for any $x \in \sh{D}$. These tangent identifications are $\Bf{G}(\bb{R})^+$-equivariant and holomorphically varying, hence $T_{\sh{D}}^{\Rm{hol}} \simeq \beta^*(\Bf{G}_{\bb{C}} \times^{P_h} \Rm{gr}_{F_h}^{-1} \fr{g}_{\bb{C}})$. Quotienting by $\Gamma$ we get $T_X \simeq E^{-1, 1}$. Also, $\theta(T_X) = \theta(E^{-1, 1}) = 0$ since there is no $(-2, 2)$-graded piece.  

\subsubsection{Deligne and Mumford canonical extensions}

We have an exact functor called canonical extension
$$
  (-)^{\mathrm{can}}\colon
  \{\text{automorphic vector bundles on }X\}
  \longrightarrow
  \{\text{vector bundles on }\bar X\};
$$
see \cite[\S3, Main Theorem~3.1]{mumford1977hirzebruch} and
\cite[\S4]{harris1989functorial}. The functor is compatible with
tensor products and duals, and hence also with internal Hom.
In particular, canonical extension preserves exact sequences of
automorphic vector bundles.

The adjoint variation $(\bb{V}_{\bb{Q}}, \sh{F}^\bullet, \nabla, Q)$ has unipotent local monodromy around $D$ \cite[proof of Theorem 4.2]{harris1989functorial}, so we have Deligne's canonical extension \cite{deligne2006equations} $(\bar{\sh{V}}, \bar{\nabla})$ which is a logarithmic flat bundle on $(\bar{X}, D)$, i.e., $\bar{V}$ is locally free over $\sh{O}_{\bar{X}}$ and 
\[\bar{\nabla}: \bar{\sh{V}} \to \bar{\sh{V}} \otimes \Omega^1_{\bar{X}}(\log D)\]
\tab
By Harris's comparison theorem, the Mumford canonical extension of the
flat automorphic bundle \(\mathcal V\) agrees with Deligne's canonical
extension; see \cite[Theorem~4.2]{harris1989functorial}. 

For each \(p\), let
\[
  \mathcal A^p
  :=
  (\mathcal F^p)^{\mathrm{can}}
\]
denote the automorphic canonical extension of \(\mathcal F^p\).
Since
\[
  0\longrightarrow\mathcal F^p
  \longrightarrow\mathcal V
  \longrightarrow\mathcal V/\mathcal F^p
  \longrightarrow0
\]
is an exact sequence of automorphic vector bundles, exactness of the
canonical-extension functor gives
\[
  0\longrightarrow\mathcal A^p
  \longrightarrow\bar{\mathcal V}
  \longrightarrow
  (\mathcal V/\mathcal F^p)^{\mathrm{can}}
  \longrightarrow0.
\]
In particular, the quotient is locally free, so
\(\mathcal A^p\subset\bar{\mathcal V}\) is saturated.

We claim that, inside \(j_*\mathcal V\),
\[
  \mathcal A^p
  =
  (j_*\mathcal F^p)\cap\bar{\mathcal V}.
\]
Indeed, the inclusion from left to right follows from
\(\mathcal A^p|_X=\mathcal F^p\). Conversely, a local section of
\(\bar{\mathcal V}\) whose restriction to \(X\) lies in
\(\mathcal F^p\) maps to a section of
\(\bar{\mathcal V}/\mathcal A^p\) which vanishes on the dense open
subset \(X\). Since this quotient is locally free, that section
vanishes identically. Hence
\[
  (\mathcal F^p)^{\mathrm{can}}
  =
  (j_*\mathcal F^p)\cap\bar{\mathcal V}=\bar{\mathcal F}^p.
\]
Thus the automorphic canonical extension of \(\mathcal F^p\) agrees
with the extended Hodge filtration inside Deligne's canonical extension
\(\bar{\mathcal V}\). In particular,
\(\bar{\mathcal F}^p\subset\bar{\mathcal V}\) is a locally free
saturated subsheaf.

Applying exactness of the canonical-extension functor to
\[
  0\longrightarrow\mathcal F^{p+1}
  \longrightarrow\mathcal F^p
  \longrightarrow E^{p,-p}
  \longrightarrow0
\]
gives an exact sequence
\[
  0\longrightarrow\bar{\mathcal F}^{p+1}
  \longrightarrow\bar{\mathcal F}^p
  \longrightarrow
  (E^{p,-p})^{\mathrm{can}}
  \longrightarrow0.
\]
Hence
\[
  \bar{\mathcal F}^p/\bar{\mathcal F}^{p+1}
  \simeq
  (E^{p,-p})^{\mathrm{can}}.
\]
In particular, the graded pieces are locally free.

The logarithmic connection satisfies Griffiths transversality:
$$
  \bar{\nabla}(\bar{\mathcal F}^p)
  \subseteq
  \bar{\mathcal F}^{p-1}
  \otimes
  \Omega^1_{\bar X}(\log D).
$$

Taking the associated graded therefore gives a logarithmic Higgs bundle

$$
  (\bar E,\bar\theta)
  =
  \operatorname{Gr}_{\bar{\mathcal F}}
  (\bar{\mathcal V},\bar\nabla),
$$

where

$$
  \bar E^{p,-p}
  =
  \bar{\mathcal F}^p/\bar{\mathcal F}^{p+1},
  \qquad
  \bar\theta\colon
  \bar E^{p,-p}
  \longrightarrow
  \bar E^{p-1,-p+1}
  \otimes
  \Omega^1_{\bar X}(\log D).
$$

\begin{proposition}
\label{prop:boundary}
There are canonical isomorphisms

$$
  \bar E^{-1,1}\simeq T_{\overline X}(-\log D),
  \qquad
  \bar F^1\simeq\Omega^1_{\overline X}(\log D)
  \simeq(\bar E^{-1,1})^\vee.
$$

\end{proposition}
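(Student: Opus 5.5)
The plan is to extend the isomorphism $T_X\simeq E^{-1,1}$ of the previous subsection to $\overline X$, using Mumford's description of the canonical extension of the tangent bundle. By the discussion above, $\bar E^{-1,1}\simeq (E^{-1,1})^{\mathrm{can}}$, and the isomorphism $T_X\simeq E^{-1,1}$ is an isomorphism of automorphic vector bundles. It comes from the $\mathbf G_{\mathbb C}$-equivariant identification $T\check{\mathcal D}\simeq \mathbf G_{\mathbb C}\times^{P_h}(\mathfrak g_{\mathbb C}/\mathfrak p_h)$. Since $(-)^{\mathrm{can}}$ is a functor on automorphic bundles, we get $\bar E^{-1,1}\simeq (T_X)^{\mathrm{can}}$.

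The key input is Mumford's theorem \cite[Proposition~3.4]{mumford1977hirzebruch}. It states that for a neat arithmetic quotient with a smooth toroidal compactification, the canonical extension of the automorphic bundle $\Omega^1_X$ (the homogeneous bundle attached to $(\mathfrak g_{\mathbb C}/\mathfrak p_h)^\vee$) is $\Omega^1_{\overline X}(\log D)$. Dually, $(T_X)^{\mathrm{can}}=T_{\overline X}(-\log D)$. Compatibility of $(-)^{\mathrm{can}}$ with duals then gives $\bar E^{-1,1}\simeq T_{\overline X}(-\log D)$ and $(\bar E^{-1,1})^\vee\simeq \Omega^1_{\overline X}(\log D)$.

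For $\bar F^1$, we use that the filtration has length three: $\bar F^1=\bar E^{1,-1}$. We then apply the canonical extension to the perfect pairing $E^{1,-1}\otimes E^{-1,1}\to\mathcal O_X$ induced by $Q$. This pairing is flat and Hodge-compatible, so it is a morphism of automorphic bundles, because the Killing form is $\mathbf G$-invariant and pairs $\mathfrak g^{1,-1}$ perfectly with $\mathfrak g^{-1,1}$. Its canonical extension is a morphism $\bar E^{1,-1}\to(\bar E^{-1,1})^\vee$, and it is an isomorphism because $(-)^{\mathrm{can}}$ commutes with duals and preserves isomorphisms. Combined with the previous step, this gives $\bar F^1\simeq\Omega^1_{\overline X}(\log D)$.

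As a cross-check, I would also verify the pairing extension directly. Deligne's extension of a polarized VHS with unipotent monodromy has $Q$ extending to a perfect pairing $\bar{\mathcal V}\otimes\bar{\mathcal V}\to\mathcal O_{\overline X}$, with $\bar F^1$ the annihilator of $\bar F^0$. Alternatively, the Higgs field $\bar\theta:\bar E^{-1,1}\to \bar E^{-2,2}\otimes\Omega^1(\log D)$ is zero, and $\bar\theta:\bar E^{0,0}\otimes T_{\overline X}(-\log D)\to \bar E^{-1,1}$ is compatible with the isomorphism.

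The main obstacle is the identification $(T_X)^{\mathrm{can}}=T_{\overline X}(-\log D)$. Everything else is formal functoriality. If Mumford's proposition is not directly citable in this form, I would check it locally near a boundary stratum. In the partial compactification $\Gamma_F\backslash\mathcal D\hookrightarrow$ torus embedding, the translation-invariant vector fields along $U_F(\mathbb C)$ become $z_i\partial_{z_i}$ in toric coordinates. These are exactly the equivariant frames used to define the canonical extension via $\mathbf G_{\mathbb C}$-equivariant trivializations, and they span $T(-\log D)$.
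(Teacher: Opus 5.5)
Your proposal is correct and follows essentially the same route as the paper. You identify $\bar E^{-1,1}$ with $(T_X)^{\mathrm{can}}$ by functoriality, then invoke Mumford's Proposition~3.4(a) and compatibility with duals to get $T_{\overline X}(-\log D)$. Finally, you use the polarization-induced automorphic isomorphism $E^{1,-1}\simeq(E^{-1,1})^\vee$ together with $\bar{\mathcal F}^2=(\mathcal F^2)^{\mathrm{can}}=0$ to identify $\bar{\mathcal F}^1=\bar E^{1,-1}$ with $\Omega^1_{\overline X}(\log D)$. Your extra cross-checks (equivariance of the Killing-form pairing on graded pieces, the local toric description) are sound but not needed.
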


\begin{proof}
Recall that on \(X\) we have the canonical identification
\(E^{-1,1}\simeq T_X.\)
By the discussion above,
\[
  \bar E^{-1,1}
  =
  \operatorname{Gr}^{-1}_{\bar{\mathcal F}}\bar{\mathcal V}
  \simeq
  (E^{-1,1})^{\mathrm{can}}
  \simeq
  (T_X)^{\mathrm{can}}.
\]
By \cite[Proposition~3.4(a)]{mumford1977hirzebruch},
\[
  (\Omega_X^1)^{\mathrm{can}}
  \simeq
  \Omega^1_{\bar X}(\log D).
\]
Since canonical extension commutes with duals, it follows that
\[
  \bar E^{-1,1}
  \simeq
  (T_X)^{\mathrm{can}}
  \simeq
  T_{\bar X}(-\log D).
\]
Dualizing gives
\(
  (\bar E^{-1,1})^\vee
  \simeq
  \Omega^1_{\bar X}(\log D).
\)
On \(X\), the polarization induces an automorphic isomorphism
\(
  E^{1,-1}
  \simeq
  (E^{-1,1})^\vee.
\)
Since canonical extension is compatible with duals, we obtain
\[
  \bar E^{1,-1}
  \simeq
  (\bar E^{-1,1})^\vee.
\]
Finally, since \(\mathcal F^2=0\), its canonical extension is also zero,
so
\(
  \bar{\mathcal F}^2=0.
\)
Therefore
\[
  \bar{\mathcal F}^1
  =
  \bar{\mathcal F}^1/\bar{\mathcal F}^2
  =
  \bar E^{1,-1}.
\]

Combining these identifications yields
\[
  \bar E^{-1,1}
  \simeq
  T_{\bar X}(-\log D),
  \qquad
  \bar{\mathcal F}^1
  \simeq
  \Omega^1_{\bar X}(\log D)
  \simeq
  (\bar E^{-1,1})^\vee.
\]
\end{proof}

\begin{theorem}[Arithmetic nefness]
\label{thm:arithmetic-nef}
Let \(X=\Gamma\backslash\mathcal D\) be a quotient arising from
a neat arithmetic subgroup of a rational group of Hermitian type. If
\(j\colon X\hookrightarrow\overline X\) is a smooth projective toroidal
compactification whose reduced boundary
\(D=\overline X\setminus X\) has simple normal crossings, then
$$
  \Omega^1_{\overline X}(\log D)
  \quad\text{is nef}.
$$

\end{theorem}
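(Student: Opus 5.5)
By Proposition~\ref{prop:boundary}, $\Omega^1_{\overline X}(\log D)\simeq\bar{\mathcal F}^1=\bar E^{1,-1}$. This is the top piece of the extended Hodge filtration of the polarized adjoint variation $(\mathbb V_{\mathbb Q},\mathcal F^\bullet,\nabla,Q)$, which has unipotent local monodromy. The plan is therefore to invoke the standard positivity of the top Hodge bundle: for a polarized $\mathbb Q$-VHS with unipotent monodromy along an snc divisor, the Deligne extension of the last nonzero Hodge piece is nef. This is Zuo's theorem, or in dual form the statement that the kernels of the extended Higgs field are weakly negative (Brunebarbe, Fujita--Kawamata for the top piece).

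Concretely, the steps are as follows.

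\begin{enumerate}
  \item Record that the top piece is $\bar E^{1,-1}=\bar{\mathcal F}^1$, since $\bar{\mathcal F}^2=0$.
  \item Dualize. Then $(\bar E^{1,-1})^\vee\simeq\bar E^{-1,1}\simeq T_{\overline X}(-\log D)$ by the polarization. Nefness of $\bar{\mathcal F}^1$ is equivalent to the statement that every quotient line bundle of $\bar{\mathcal F}^1|_C$, for every curve $C\subset\overline X$, has nonnegative degree.
  \item For $C\not\subset D$, restrict the variation to $C^\circ=C\setminus D$. Pulling back to the normalization keeps monodromy unipotent (or quasi-unipotent, which is fine after a finite base change). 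Griffiths curvature computation: the Hodge metric on the top piece $F^1$ has semipositive curvature, because the second fundamental form of $F^1\subset\mathcal V$ is $\theta$, which lands in the lower piece. By Mumford/Schmid/Kawamata, the metric has at most logarithmic growth at the punctures, so the curvature integral computes the degree of the Deligne extension. Any quotient line bundle thus inherits a singular metric of semipositive curvature with mild singularities, giving nonnegative degree.
  \item For $C\subset D$, use the boundary structure. Alternatively, cite the general theorem, valid on all of $\overline X$: Fujino--Fujisawa show that $\bar{\mathcal F}^{\max}$ is nef for unipotent monodromy on projective $\overline X$ with snc boundary, which handles curves in $D$ directly.
\end{enumerate}

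The main obstacle is step 4, curves inside the boundary, where the Hodge metric degenerates. I would cite Fujino--Fujisawa (or Kawamata's ``Characterization of abelian varieties''-type argument). The alternative I would try first is the description of $D$ in toroidal compactifications: each stratum fibers over a lower-dimensional Shimura variety with torus-torsor/abelian fibers. On a stratum, $\Omega^1_{\overline X}(\log D)|_{D_I}$ has a filtration whose graded pieces are the log cotangent bundle of the boundary Shimura variety (nef by induction on dimension), the pullback of a Hodge bundle of the abelian scheme (nef), and trivial bundles from the residues. An extension of nef bundles is nef, which would close the argument.
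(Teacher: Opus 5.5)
Your proposal is correct and follows the paper's route. Proposition~\ref{prop:boundary} identifies $\Omega^1_{\overline X}(\log D)$ with the top Hodge piece $\bar{\mathcal F}^1$, equivalently with the dual of the Higgs kernel $\bar E^{-1,1}$, and nefness then follows from the semipositivity theorem for canonical extensions with unipotent monodromy. The paper cites Brunebarbe's Theorem~1.8 for $(\bar E^{-1,1})^\vee$ and, in the following remark, Fujino--Fujisawa's Corollary~1.2 for $\bar{\mathcal F}^1$, which is exactly your step~4. With that citation in hand, your curvature sketch for curves off $D$ and the toroidal-stratum induction are unnecessary; the latter would in any case need real work on how stratum closures lie over compactified boundary Shimura varieties.
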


\begin{proof}
The bundle \(\bar E^{-1,1}\) is a holomorphic subbundle of
\(\bar E\), and \(\bar\theta(\bar E^{-1,1})=0\)
since there is no \(\bar E^{-2,2}\). Thus
\cite[Theorem~1.8]{brunebarbe2018symmetric} implies that
\((\bar E^{-1,1})^\vee\) is nef. By
Proposition~\ref{prop:boundary},
\[
  (\bar E^{-1,1})^\vee
  \simeq
  \Omega^1_{\bar X}(\log D),
\]
which proves the theorem.
\end{proof}

\begin{remark}
Alternatively, since \(\sh{F}^1\) is the lowest nonzero piece of the Hodge
filtration, we may apply
\cite[Corollary~1.2]{fujino2019semipositivity} to
$$
  \bar{\sh{F}}^1\simeq\Omega^1_{\bar X}(\log D)
$$

to obtain the desired nefness in this way as well.
\end{remark}

\subsection{Ball quotients and completion of the argument}
The arithmetic ball quotient case follows from the previous argument, while the nonarithematic case is a direct application of the following proposition:
\begin{proposition}\cite[Theorem~3]{Cadorel}\label{prop:ball-nef}
Let
\[
  X=\Gamma\backslash\mathbb B^n,
\]
where \(\Gamma\subset\operatorname{Aut}(\mathbb B^n)\) is a neat
finite-covolume lattice whose parabolic elements are unipotent. Let
\[
  X\hookrightarrow\overline X
\]
be its smooth projective toroidal compactification and let
\(D=\overline X\setminus X\) be the reduced boundary divisor. Then
\[
  \Omega^1_{\overline X}(\log D)
  \quad\text{is nef}.
\]
\end{proposition}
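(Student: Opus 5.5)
This is Cadorel's theorem (Theorem~\ref{thm:cadorel-ball}), so in the paper it suffices to cite \cite[Theorem~3]{Cadorel}. Still, here is how I would prove it directly, following the Higgs-bundle strategy of Theorem~\ref{thm:arithmetic-nef}. Arithmeticity is only used there to produce a rational variation of Hodge structure, and Brunebarbe's criterion does not need one. A complex (or even real) polarized variation with unipotent monodromy at infinity is enough.

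First I build a uniformizing complex variation on $X$. Write $\mathbb B^n=PU(n,1)/K$ and take the standard representation $\C^{n+1}$ with the Hermitian form of signature $(n,1)$. After passing to a finite-index subgroup, or working projectively (which changes none of the conclusions), $\Gamma$ lifts to $U(n,1)$. The tautological splitting $\C^{n+1}=L_x\oplus L_x^{\perp}$, with $L_x$ the negative line attached to $x\in\mathbb B^n$, gives a $\C$-VHS of weight one with Hodge numbers $(1,n)$. Its Higgs bundle is $E=E^{1,0}\oplus E^{0,1}$, and the Higgs field $\theta\colon E^{1,0}\to E^{0,1}\otimes\Omega^1_X$ induces an isomorphism
\[
T_X\simeq \operatorname{Hom}(E^{1,0},E^{0,1}).
\]
The adjoint system $\operatorname{End}(E)$ then contains $T_X$ as its lowest graded piece $E^{-1,1}$, and this piece is killed by the Higgs field, exactly as in the arithmetic case. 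This is the same adjoint construction as in \S\ref{sec:arithmetic-quot}, carried out over $\C$.

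Second, I extend to $\overline X$. The parabolic elements of $\Gamma$ are unipotent by hypothesis, so Deligne's canonical extension $\bar{\mathcal V}$ exists and carries a logarithmic connection. By Schmid's nilpotent orbit theorem the Hodge filtration extends to locally free saturated subbundles $\bar{\mathcal F}^p$. The result is a logarithmic system of Hodge bundles $(\bar E,\bar\theta)$ on $(\overline X,D)$.

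The key step, and the one I expect to be the main obstacle, is identifying $\bar E^{-1,1}$ with $T_{\overline X}(-\log D)$. In the arithmetic case this came from Mumford's and Harris's theory of automorphic canonical extensions (Proposition~\ref{prop:boundary}), which is not available for nonarithmetic $\Gamma$. I would instead compute locally near each cusp. By Mok's description \cite{Mok}, a neighbourhood of a boundary component is a punctured disc bundle over an abelian variety, and the monodromy is a single unipotent element of a Heisenberg group. Siegel coordinates then give explicit frames of $\bar{\mathcal F}^\bullet$. From these frames I would check that $\bar\theta$ maps $\bar E^{-1,1}$ isomorphically onto the logarithmic vector fields: the boundary defining function $q=e^{2\pi i z/\ell}$ produces $q\,\partial_q$, and the unipotent monodromy shows the extension has exactly logarithmic, and not worse, behaviour.

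Granting this identification, the conclusion follows as in Theorem~\ref{thm:arithmetic-nef}. The subbundle $\bar E^{-1,1}$ is annihilated by $\bar\theta$, so \cite[Theorem~1.8]{brunebarbe2018symmetric} shows that its dual $\Omega^1_{\overline X}(\log D)$ is nef.
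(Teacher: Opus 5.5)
Your first sentence is exactly what the paper does: the statement is quoted from \cite[Theorem~3]{Cadorel} without proof. The paper invokes it only for nonarithmetic ball factors; arithmetic ones are covered by Theorem~\ref{thm:arithmetic-nef}.

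Your direct argument is a genuinely different route, and its outline is sound. It transports the Higgs-bundle proof of Theorem~\ref{thm:arithmetic-nef} to every ball quotient. In doing so it replaces the Mumford--Harris input of Proposition~\ref{prop:boundary} by a computation at the cusps. That input, not rationality of the variation, is where arithmeticity really enters, contrary to your opening remark.

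The step you flag as the main obstacle does go through. Take Siegel coordinates $(z,w)\in\C\times\C^{n-1}$ for the Hermitian form $h(u,u)=|u'|^2-2\operatorname{Im}(u_0\bar u_n)$ on $\C^{n+1}=\C e_0\oplus\C^{n-1}\oplus\C e_n$. Then:
\begin{enumerate}
\item the Hodge line is spanned by $v=(z,w,1)$;
\item the monodromy around $D$ is $\exp(\ell N)$ with $N=e_0\otimes e_n^{*}$;
\item the frame $\tilde e_i=\exp(zN)e_i$ of Deligne's extension gives $v=\tilde e_n+\sum_j w_j\tilde e_j$.
\end{enumerate}
With $q=e^{2\pi i z/\ell}$ this yields
\[
  \bar\nabla v=\frac{\ell}{2\pi i}\,\tilde e_0\,\frac{dq}{q}+\sum_j\tilde e_j\,dw_j .
\]
Hence $\bar\theta(q\partial_q)v\equiv\frac{\ell}{2\pi i}\tilde e_0$ and $\bar\theta(\partial_{w_j})v\equiv\tilde e_j$ modulo $v$. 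These classes remain independent along $q=0$. So the globally defined map $T_{\overline X}(-\log D)\to\operatorname{Hom}(\bar E^{1,0},\bar E^{0,1})$ induced by $\bar\theta$ is an isomorphism, and Brunebarbe's theorem applies as in the arithmetic case.

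As for the trade-off: the citation is one line, but it leaves the paper's argument split into an arithmetic Higgs-bundle case and a nonarithmetic black box. Your route gives a single uniform proof for all ball quotients, depending only on \cite[Theorem~1.8]{brunebarbe2018symmetric} and the local structure of the cusps.

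Two points should be sharpened.
\begin{enumerate}
\item On $D$ you need more than logarithmic poles of $\bar\nabla$, which is just unipotence. You need the residue $N$ to move the limiting Hodge line, so that $\bar\theta(q\partial_q)$ does not vanish along $D$. That is what ``exactly logarithmic'' must mean.
\item It is cleaner to work from the start with $\operatorname{End}(\mathbb V)$, i.e.\ the adjoint $\mathfrak{su}(n,1)$-variation. This is a real polarized variation defined for $PU(n,1)$ itself, so no lift is needed. Passing to a finite-index subgroup would instead force you to descend nefness along the induced finite map of toroidal compactifications, after checking that this map is log \'etale.
\end{enumerate}
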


Thus all factors occurring in
proposition~\ref{prop:neat-product-cover}
admit smooth projective logarithmic compactifications with nef
logarithmic cotangent bundle. By \ref{lem:product-log-nef} we get the nefness for the product as well.

\section{Perverse Euler characteristics}
\label{sec:Euler-characteristics}

In order to use nefness results to get our desired inequalities, we first recall some facts which are discussed in \cite{ArapuraPatel} and \cite{ArapuraMesePatel}. We will state it in the following generality.

Let $\overline{X}$ be a smooth projective variety of dimension $n$ with $D\subset \overline{X}$ an SNC divisor. We denote $X=\overline{X}\setminus D$ for the quasi-projective variety. 
For a constructible complex $K$ characteristic cycle $CC(K)\subset T^*X$ is a conical Lagrangian cycle of dimension $n$, see \cite[Chapter~IX]{KashiwaraSchapira} and \cite[Sections~4.1--4.2]{Dimca}. If $P$ is perverse, then $CC(P)$ is effective. Moreover, if $P$ has generic rank $r$ then
\[CC(P)= r[T_X^*X]+\sum_{\alpha}m_{\alpha}[\Lambda_{\alpha}], \qquad m_{\alpha}>0,\] where $\Lambda_\alpha$ are the remaining irreducible conical components.
Let $\overline{CC(P)}$ denote the closure of $CC(P)$ in $\Omega^1_{\overline{X}}(\log D)$. The logarithmic Dubson-Kashiwara index formula of Wu-Zhou \cite[Theorem~1.6]{WuZhou} gives \[ \chi(X,P) = [\overline{CC(P)}]\cdot[s_0(\overline{X})] \] where $s_0:\overline{X} \to \Omega^1_{\overline{X}}(\log D)$ is the zero section.

If $\Omega^1_{\overline{X}}(\log D)$ is nef, positivity of conical cycles in a nef vector bundle \cite[Example~8.2.6]{LazarsfeldPositivityII} shows that the intersection of each irreducible conical component with the zero section is nonnegative. This gives the criterion stated in \ref{thm:AP} and gives $\chi(X,P)\geq 0$ in the nef log cotangent bundle case.

One can improve this result and get a sharper inequality as shown in \cite{ArapuraMesePatel}.

\begin{proposition}[Arapura--Mese--Patel \cite{ArapuraMesePatel}] 
\label{prop:AMP-positivity} 

 Let $\overline{X}$ be a smooth projective variety of dimension $n$ with $D\subset \overline{X}$ an SNC divisor and $X:=\overline{X}\setminus D$. Suppose that $\Omega_{\overline{X}}^1(\log D)$ is nef, let $P\in\Perv(X,\Q)$ have generic rank $r$. Then \[ \chi(X,P)\geq r(-1)^n\chi(X). \] Consequently, if $P$ has full support and $\chi(X)\neq 0$, then \[ \chi(X,P)>0. \] 
\end{proposition}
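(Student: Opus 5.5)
The plan is to prove the inequality through the logarithmic Dubson--Kashiwara index formula recalled above, applied separately to each component of the characteristic cycle. Since $\Omega^1_{\overline X}(\log D)$ is nef, every irreducible conical component contributes nonnegatively, and we keep only the contribution of the zero section. Write
\[
CC(P)=r[T^*_XX]+\sum_\alpha m_\alpha[\Lambda_\alpha],\qquad m_\alpha>0 .
\]
Taking closures in $E:=\Omega^1_{\overline X}(\log D)$ and applying Wu--Zhou \cite[Theorem~1.6]{WuZhou} gives
\[
\chi(X,P)=r\,[\overline{T^*_XX}]\cdot[s_0(\overline X)]+\sum_\alpha m_\alpha[\overline{\Lambda_\alpha}]\cdot[s_0(\overline X)].
\]
Each $\overline{\Lambda_\alpha}$ is an irreducible conical $n$-cycle in the nef bundle $E$. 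By \cite[Example~8.2.6]{LazarsfeldPositivityII}, each term $[\overline{\Lambda_\alpha}]\cdot[s_0]\ge 0$, so $\chi(X,P)\ge r\,[\overline{T^*_XX}]\cdot[s_0]$.

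The next step is to identify the zero-section term. The closure of the zero section of $T^*X$ inside $E$ is the zero section $s_0(\overline X)$ itself. Its self-intersection is therefore $c_n(E)=c_n(\Omega^1_{\overline X}(\log D))$. By the logarithmic Gauss--Bonnet formula this equals $(-1)^n\chi(X)$. Equivalently, we can apply the same index formula to $P=\Q_X[n]$, whose characteristic cycle is exactly $[T^*_XX]$, to get $\chi(X,\Q_X[n])=(-1)^n\chi(X)$. Either way we obtain $\chi(X,P)\ge r(-1)^n\chi(X)$.

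For the strict inequality, suppose $P$ has full support, so $r\ge1$. If $\chi(X)\neq0$, it suffices to know that $(-1)^n\chi(X)\ge0$. This follows from the case just proved applied to $\Q_X[n]$, which gives $(-1)^n\chi(X)=\chi(X,\Q_X[n])\ge0$. Since $\chi(X)\ne0$, in fact $(-1)^n\chi(X)>0$, and hence $\chi(X,P)\ge r(-1)^n\chi(X)>0$.

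The main obstacle is the bookkeeping of sign and normalization conventions: the index formula must be set up so that the zero-section closure meets $s_0$ with the sign that makes $CC(\Q_X[n])$ contribute $(-1)^n\chi(X)$, rather than $\chi(X)$ or $-\chi(X)$. I would resolve this by calibrating the conventions against the constant sheaf case, as in the previous paragraph, instead of computing Chern classes directly. The Lazarsfeld positivity statement requires the cycles to be conical and irreducible. This holds because the closure of an irreducible conical Lagrangian is again irreducible and stable under the $\mathbb C^*$-action on $E$.
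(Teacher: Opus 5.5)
Your proposal is correct and follows the route the paper sketches before citing Arapura--Mese--Patel: decompose $CC(P)$, apply the Wu--Zhou log index formula, discard the $\overline{\Lambda_\alpha}$-terms using Lazarsfeld's positivity for conical cycles in the nef bundle $\Omega^1_{\overline X}(\log D)$, and identify the zero-section term with $c_n(\Omega^1_{\overline X}(\log D))=(-1)^n\chi(X)$. One attribution needs fixing: applying the sharpened inequality to $\Q_X[n]$ only gives the tautology $(-1)^n\chi(X)\geq(-1)^n\chi(X)$, so the bound $(-1)^n\chi(X)\geq0$ should come from the same Lazarsfeld positivity applied to the zero section $s_0(\overline X)$ itself (an irreducible conical $n$-cycle), or equivalently from Theorem~\ref{thm:AP} applied to $\Q_X[n]$.
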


Hence, in the full support case, strict positivity reduces to showing that $\chi(X)\neq 0$. We have already established the required nefness statements for locally Hermitian symmetric spaces. We first deduce the corresponding nonnegativity results, and then verify that $\chi(X)\neq 0$ in the cases under consideration, which yields strict positivity.

\subsection{Locally Hermitian symmetric varieties}

Now we will prove the Euler characteristics result by combining the criterion given in \ref{thm:AP} and the nefness results obtained above.

\begin{theorem}[Locally Hermitian symmetric varieties]
\label{thm:smooth-pec}
Let \(\mathcal D\) be a Hermitian symmetric domain of noncompact type
and let $\Gamma\subset\operatorname{Aut}(\mathcal D)^\circ$ be a torsion-free lattice. Define $X_\Gamma=\Gamma\backslash\mathcal D.$
Then \(X_\Gamma\) has the PEC property i.e.
\[
  \chi(X_\Gamma,P)\geq0
  \qquad
  \text{for every }P\in\Perv(X_\Gamma,\Q).
\]
In particular, if \(n=\dim_{\C}X_\Gamma\), then
\[
  (-1)^n\chi(X_\Gamma)\geq0.
\]
\end{theorem}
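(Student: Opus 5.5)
The plan is to deduce the PEC property for \(X_\Gamma\) from the stack-level reduction already carried out, and then to read off the sign of \(\chi(X_\Gamma)\) by testing PEC on the constant sheaf.

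\textbf{Step 1: reduce to the stack statement.} Since \(\Gamma\) is torsion-free, the stack \(\mathcal X_\Gamma=[\mathcal D/\Gamma]\) is represented by the complex manifold \(X_\Gamma\). This manifold carries its natural smooth quasi-projective structure, and on it \(\chiorb\) is the ordinary Euler characteristic by \cite[Theorem~2.5(i)]{ArapuraPatel}. So it suffices to show that \(\mathcal X_\Gamma\) has PEC in the sense of \eqref{eq:PEC-stack}.

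\textbf{Step 2: pass to a neat product cover.} Apply Proposition~\ref{prop:neat-product-cover}. This gives a finite-index subgroup \(\Lambda=\prod_i\Lambda_i\subset\Gamma\) whose quotient \(Y=\prod_i Y_i\) is a smooth quasi-projective variety, finite étale over \(X_\Gamma\). Each factor \(Y_i\) is one of two kinds:
\begin{itemize}
\item a neat arithmetic quotient, which is a connected component of a Shimura variety at neat level (the adjoint group being of Hermitian type);
\item a neat finite-volume ball quotient with unipotent parabolic elements.
\end{itemize}

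\textbf{Step 3: compactify each factor and check nefness.} For each factor, choose the compactification given by Proposition~\ref{prop:building-block-compactifications}.
\begin{itemize}
\item For arithmetic factors, Theorem~\ref{thm:arithmetic-nef} gives that \(\Omega^1_{\overline Y_i}(\log D_i)\) is nef.
\item For ball factors, Proposition~\ref{prop:ball-nef} (Cadorel) gives the same conclusion.
\item Compact factors have empty boundary. They are covered by the same results, since for a compact factor the construction of Section~\ref{sec:arithmetic-quot} runs with \(D=\varnothing\); Cadorel's result also applies.
\end{itemize}
By Lemma~\ref{lem:product-log-nef}, the bundle \(\Omega^1_{\overline Y}(\log D)\) on the product compactification is nef.

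\textbf{Step 4: conclude PEC for \(X_\Gamma\).} Theorem~\ref{thm:AP} now shows that \(Y\) has PEC. Corollary~\ref{cor:PEC-neat-product-cover}, which relies on Proposition~\ref{prop:PEC-finite-etale}, transfers PEC to \(\mathcal X_\Gamma=X_\Gamma\). This is exactly the content of Corollary~\ref{cor:final-reduction} with its hypotheses verified.

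\textbf{Step 5: the sign of \(\chi(X_\Gamma)\).} Since \(X_\Gamma\) is smooth of pure dimension \(n\), the shifted constant sheaf \(\Q_{X_\Gamma}[n]\) is perverse. Therefore
\[
0\le \chi(X_\Gamma,\Q[n])=(-1)^n\chi(X_\Gamma).
\]

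\textbf{Main obstacle.} The only delicate point is bookkeeping in Step 2: making sure every factor produced there is covered by one of the two nefness inputs. In particular, an arithmetic factor, such as an arithmetic ball quotient, must be treated as a Shimura-type connected component of a neat arithmetic quotient so that Theorem~\ref{thm:arithmetic-nef} applies. This is also why the hypothesis of that theorem is phrased for a neat arithmetic subgroup of a rational group of Hermitian type.
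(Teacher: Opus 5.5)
Your proposal is correct and follows the paper's proof essentially step for step. Both arguments take the neat product cover from Proposition~\ref{prop:neat-product-cover}, get factorwise nefness from Theorem~\ref{thm:arithmetic-nef} and Proposition~\ref{prop:ball-nef}, apply Lemma~\ref{lem:product-log-nef} and Theorem~\ref{thm:AP} to obtain PEC on $Y$, transfer it to $X_\Gamma$ by Proposition~\ref{prop:PEC-finite-etale}, and then test on $\Q_{X_\Gamma}[n]$. Your detour through the stack in Step 1 is harmless but unnecessary, since the paper applies Proposition~\ref{prop:PEC-finite-etale} directly to $Y\to X_\Gamma$, where $X_\Gamma$ is already a variety.
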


\begin{proof}
By Proposition~\ref{prop:neat-product-cover}, there is a finite index
neat subgroup $\Lambda=\Lambda_1\times\cdots\times\Lambda_r\subset\Gamma$
such that
$$
  Y=\Lambda\backslash\mathcal D
   =Y_1\times\cdots\times Y_r
$$
is a smooth quasi-projective variety and
$$
  Y\longrightarrow X_\Gamma
$$

is finite \'etale and surjective.  Each \(Y_i\) is either an arithmetic
locally symmetric variety or a finite-volume complex-ball quotient with
unipotent parabolic elements.

Choose for each \(Y_i\) the smooth projective compactification $Y_i=\overline Y_i\setminus D_i$ of Proposition~\ref{prop:building-block-compactifications}.  By
Theorem~\ref{thm:arithmetic-nef} and
Proposition~\ref{prop:ball-nef}, respectively,
$$
  \Omega^1_{\overline Y_i}(\log D_i)
$$
is nef for every \(i\).  Hence Lemma~\ref{lem:product-log-nef} shows
that, for
$$
  \overline Y=\prod_i\overline Y_i,
  \qquad
  D=\overline Y\setminus Y,
$$
the log cotangent bundle $\Omega^1_{\overline Y}(\log D)$ is nef.  Theorem~\ref{thm:AP} therefore gives PEC for \(Y\).
Since \(Y\to X_\Gamma\) is finite \'etale and surjective,
Proposition~\ref{prop:PEC-finite-etale} gives PEC for \(X_\Gamma\).
Finally, \(\Q_{X_\Gamma}[n]\) is perverse because \(X_\Gamma\) is smooth
of dimension \(n\). Thus
$$
  0\leq
  \chi\bigl(X_\Gamma,\Q_{X_\Gamma}[n]\bigr)
  =
  (-1)^n\chi(X_\Gamma).
$$

\end{proof}

\subsection{Locally Hermitian symmetric Deligne--Mumford stacks}

We now recover the stack statement formally from the neat-cover
reduction.

\begin{theorem}[The stack case]
\label{thm:stack-pec}
Let \(\mathcal D\) be a Hermitian symmetric domain of noncompact type
and let $\Gamma\subset\operatorname{Aut}(\mathcal D)^\circ$
be a lattice. Let \(\mathcal X_\Gamma\) be the algebraic
Deligne-Mumford stack constructed above, whose analytification is
$\mathcal X_\Gamma^{\mathrm{an}}\simeq[\mathcal D/\Gamma].$
Then
\[
  \chiorb(\mathcal X_\Gamma,P)\geq0
  \qquad
  \text{for every }P\in\Perv(\mathcal X_\Gamma,\Q).
\]
In particular, if \(n=\dim_{\C}\mathcal D\), then
\[
  (-1)^n\chiorb(\mathcal X_\Gamma)\geq0.
\]
\end{theorem}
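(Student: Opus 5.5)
The plan is to reduce to the torsion-free case through a neat finite étale cover, then transfer nonnegativity back with Proposition~\ref{prop:PEC-finite-etale}. First I would apply Proposition~\ref{prop:neat-product-cover} to $\Gamma$. This gives a finite-index neat subgroup $\Lambda=\Lambda_1\times\cdots\times\Lambda_r\subset\Gamma$ and a representable, finite étale, surjective morphism
\[
  q\colon Y=\Lambda\backslash\mathcal D\longrightarrow[\mathcal D/\Gamma]=\mathcal X_\Gamma .
\]
The constant degree of $q$ is $[\Gamma:\Lambda]$, since after base change along the atlas $\mathcal D\to[\mathcal D/\Gamma]$ it becomes a disjoint union of $[\Gamma:\Lambda]$ copies of $\mathcal D$. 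The variety $Y$ is itself a representable finite étale atlas of $\mathcal X_\Gamma$, so the hypothesis of Proposition~\ref{prop:PEC-finite-etale} on $\mathcal X_\Gamma$ holds. If one insists on the algebraic stack, I would use that $\mathcal X_\Gamma$ is (the analytification of) the quotient $[Y'/(\Gamma/\Lambda')]$ for a normal neat $\Lambda'\triangleleft\Gamma$ contained in $\Lambda$. Such a $\Lambda'$ exists by taking the normal core, which is still of finite index and neat.

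Second, $\Lambda$ is torsion-free, so Theorem~\ref{thm:smooth-pec}, or equivalently the argument in its proof through Lemma~\ref{lem:product-log-nef} and Theorem~\ref{thm:AP}, shows that $Y$ has the PEC property. The implication ``$Y$ has PEC $\Rightarrow$ $\mathcal X_\Gamma$ has PEC'' in Proposition~\ref{prop:PEC-finite-etale} then gives
\[
\chiorb(\mathcal X_\Gamma,P)=\tfrac{1}{[\Gamma:\Lambda]}\,\chi(Y,q^*P)\ge 0
\]
for every $P\in\Perv(\mathcal X_\Gamma,\Q)$. This uses that $q^*P$ is perverse because $q$ is representable étale, and hence $t$-exact in the smooth-local description of the perverse $t$-structure.

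Third, for the last inequality I would take $P=\Q_{\mathcal X_\Gamma}[n]$. It is perverse because $q^*\Q_{\mathcal X_\Gamma}[n]=\Q_Y[n]$ is perverse on the smooth $n$-dimensional variety $Y$. The degree formula gives $\chiorb(\mathcal X_\Gamma,\Q[n])=(-1)^n\chiorb(\mathcal X_\Gamma)$, and the inequality follows.

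The only real obstacle is bookkeeping between the analytic quotient stack and an algebraic Deligne--Mumford model. One has to make sure that $\chiorb$ and perversity can be computed on the analytic side, where the paper has fixed its conventions. One also needs the atlas condition of Proposition~\ref{prop:PEC-finite-etale}, which is satisfied by the normal neat subgroup $\Lambda'$ above. All the geometric content is already in Theorem~\ref{thm:smooth-pec}.
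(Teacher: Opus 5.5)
Your proposal is correct and follows the paper's proof essentially step for step. You take the neat product cover $Y=\Lambda\backslash\mathcal D\to[\mathcal D/\Gamma]$ from Proposition~\ref{prop:neat-product-cover}, get PEC for $Y$ from Theorem~\ref{thm:smooth-pec}, transfer it to $\mathcal X_\Gamma$ with Proposition~\ref{prop:PEC-finite-etale}, and finish with the perverse sheaf $\Q_{\mathcal X_\Gamma}[n]$. Your normal-core bookkeeping is harmless but unnecessary: the direction of Proposition~\ref{prop:PEC-finite-etale} you use needs only the finite \'etale degree formula, and $Y$ itself already supplies the atlas hypothesis.
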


\begin{proof}
Let $Y=\Lambda\backslash\mathcal D$ be the smooth quasi-projective variety supplied by Proposition~\ref{prop:neat-product-cover}.  By
Theorem~\ref{thm:smooth-pec}, \(Y\) has PEC.  Moreover, Proposition
~\ref{prop:neat-product-cover} gives a representable finite \'etale
surjective morphism of analytic Deligne--Mumford stacks
$$
  Y^{\mathrm{an}}
  \longrightarrow
  [\mathcal D/\Gamma]
  \simeq
  \mathcal X_\Gamma^{\mathrm{an}}.
$$

Hence Proposition~\ref{prop:PEC-finite-etale} implies that
\(\mathcal X_\Gamma\) has PEC.

Finally, \(\mathcal X_\Gamma\) is smooth of dimension
\(n=\dim_{\C}\mathcal D\), so
\(\Q_{\mathcal X_\Gamma}[n]\) is perverse. Therefore
$$
  0\leq
  \chiorb\bigl(
    \mathcal X_\Gamma,
    \Q_{\mathcal X_\Gamma}[n]
  \bigr)
  =
  (-1)^n\chiorb(\mathcal X_\Gamma).
$$

\end{proof}

We also record the following corollary for finite level Shimura stacks.
\begin{corollary}[Finite-level Shimura stacks]
\label{cor:shimura-full}
Let \((G,\mathcal X)\) be a Shimura datum and let
\(K\subset G(\A_f)\) be an arbitrary compact open subgroup. Then
\[
  \chiorb\bigl(
    \Sh_K^{\mathrm{DM}}(G,\mathcal X)_{\C},P
  \bigr)\geq0
\]
for every
\[
  P\in
  \Perv\bigl(
    \Sh_K^{\mathrm{DM}}(G,\mathcal X)_{\C},\Q
  \bigr).
\]
If \(K\) is neat, then
\(\Sh_K^{\mathrm{DM}}(G,\mathcal X)_{\C}\)
is represented by the usual smooth finite-level Shimura variety, and
\(\chiorb\) is the ordinary Euler characteristic.
\end{corollary}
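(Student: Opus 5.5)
The plan is to realize the Shimura stack as a finite disjoint union of stacks of the form $[\mathcal D/\Gamma]$ and then apply Theorem~\ref{thm:stack-pec} to each piece.

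\textbf{Step 1 (decomposition at neat level).} Write $\Sh_{K'}(G,\mathcal X)_{\C}$ via the standard description as a finite disjoint union of connected components
\[
\Sh_{K'}(G,\mathcal X)_{\C}=\coprod_{g\in\mathcal C'}\Gamma'_g\backslash\mathcal D^+ .
\]
Here $\mathcal D^+$ is a connected component of $\mathcal X$ and $\mathcal C'$ is a finite set of double coset representatives. Each group $\Gamma'_g=gK'g^{-1}\cap G(\Q)_+$ is taken through its image in $G^{\mathrm{ad}}(\R)^+\subset\operatorname{Aut}(\mathcal D^+)^\circ$; this image is a neat arithmetic lattice.

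\textbf{Step 2 (passing to the quotient stack).} The finite group $K/K'$ acts on this union and permutes the components. Group the components into $K/K'$-orbits and, for each orbit, let $F_g$ be the stabilizer of a chosen component. Then
\[
\bigl[\Sh_{K'}/(K/K')\bigr]\simeq\coprod_{\text{orbits}}\bigl[(\Gamma'_g\backslash\mathcal D^+)/F_g\bigr]\simeq\coprod[\mathcal D^+/\Gamma_g].
\]
Here $\Gamma_g$ is the image in $\operatorname{Aut}(\mathcal D^+)^\circ$ of the extension of $F_g$ by $\Gamma'_g$, so $\Gamma_g$ is again a lattice.

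\textbf{The main obstacle} is the second isomorphism. The image of $gKg^{-1}\cap G(\Q)_+$ in the adjoint group may have a kernel, because the center acts trivially on $\mathcal D^+$; this produces extra generic stabilizers. To handle this, I would not claim a literal isomorphism. Instead I would only produce a representable finite étale surjection
\[
\Gamma'_g\backslash\mathcal D^+\to\text{(component of }\Sh_K^{\mathrm{DM}}),
\]
namely the restriction of the atlas $\Sh_{K'}\to\Sh_K^{\mathrm{DM}}$ to one component. That is all the argument needs.

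\textbf{Step 3 (concluding).} Theorem~\ref{thm:smooth-pec} gives PEC for the neat quotient $\Gamma'_g\backslash\mathcal D^+$. Then Proposition~\ref{prop:PEC-finite-etale}, applied with the atlas $\Sh_{K'}$ (a variety, finite étale over the stack), transfers PEC to each component of $\Sh_K^{\mathrm{DM}}$. Finally, $\chiorb$ is additive over disjoint unions, and a perverse sheaf restricts to perverse sheaves on the components, so summing gives
\[
\chiorb\bigl(\Sh_K^{\mathrm{DM}}(G,\mathcal X)_{\C},P\bigr)\geq 0 .
\]
Equivalently, one can apply Proposition~\ref{prop:PEC-finite-etale} directly to the degree-$|K/K'|$ map $\Sh_{K'}\to\Sh_K^{\mathrm{DM}}$, since $\Sh_{K'}$ has PEC componentwise. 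This direct route sidesteps the obstacle in Step 2.

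\textbf{The neat case.} When $K$ is neat, take $K'=K$. The stack is then the smooth variety $\Sh_K$, and $\chiorb$ is the ordinary Euler characteristic by \cite[Theorem~2.5(i)]{ArapuraPatel}.
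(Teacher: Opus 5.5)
Your proposal is correct. Its direct route is exactly the paper's proof: apply Proposition~\ref{prop:PEC-finite-etale} to the degree-$|K/K'|$ torsor $\Sh_{K'}(G,\mathcal X)_{\C}\to\Sh_K^{\mathrm{DM}}(G,\mathcal X)_{\C}$, use that $\Sh_{K'}(G,\mathcal X)_{\C}$ has PEC componentwise by Theorem~\ref{thm:smooth-pec}, and take $K'=K$ in the neat case. The componentwise analysis of Step~2 is unnecessary, though your caution about central elements of $K$ acting trivially is well founded; also note that $G^{\mathrm{ad}}(\mathbb R)^+$ surjects onto $\operatorname{Aut}(\mathcal D^+)^\circ$ with compact kernel rather than sitting inside it, which does not affect torsion-freeness of the image of a neat group.
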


\begin{proof}
Choose a normal neat compact open subgroup$K'\triangleleft K.$ Then
$$
  \Sh_K^{\mathrm{DM}}(G,\mathcal X)
  =
  \bigl[
    \Sh_{K'}(G,\mathcal X)/(K/K')
  \bigr],
$$

so the natural morphism
$$
  \Sh_{K'}(G,\mathcal X)_{\C}
  \longrightarrow
  \Sh_K^{\mathrm{DM}}(G,\mathcal X)_{\C}
$$
is finite \'etale and surjective.

Since \(K'\) is neat,
\(\Sh_{K'}(G,\mathcal X)_{\C}\) is a smooth quasi-projective variety,
and its connected components are Hermitian locally symmetric varieties.
Hence it has PEC by Theorem~\ref{thm:smooth-pec}.  Therefore
Proposition~\ref{prop:PEC-finite-etale} implies that
\(\Sh_K^{\mathrm{DM}}(G,\mathcal X)_{\C}\) has PEC.

If \(K\) is neat, we may take \(K'=K\), so
\(\Sh_K^{\mathrm{DM}}(G,\mathcal X)_{\C}\) is the usual smooth Shimura
variety and \(\chiorb\) is the ordinary Euler characteristic.
\end{proof}

\subsection{Positivity of Euler Characteristics}
To get strict inequality for $P \in \Rm{Perv}(\sh{X}_\Gamma, \bb{Q})$ with full support, we first have a nonvanishing result:
\begin{proposition}
\label{prop:Euler-nonvanishing}
    Let $\sh{D}$ be a Hermitian symmetric domain of noncompact type, and $\Gamma \subset \Rm{Aut}(\sh{D})^\circ$ be a torsion-free lattice. Put $X_\Gamma = \Gamma\setminus \sh{D}$. Then $\chi(X_{\Gamma}) \neq 0$. 
\end{proposition}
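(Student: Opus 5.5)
We prove that \(\chi(X_\Gamma)\neq 0\) by identifying it, via Gauss--Bonnet, with a volume of \(X_\Gamma\) times a nonzero constant.

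\textbf{Reduction to a product.} By Proposition~\ref{prop:neat-product-cover} there is a finite index subgroup \(\Lambda=\Lambda_1\times\cdots\times\Lambda_r\subset\Gamma\) such that \(Y=\prod_i Y_i\to X_\Gamma\) is finite \'etale of degree \(d=[\Gamma:\Lambda]\). Euler characteristics are multiplicative in finite covers and in products, so
\[
  d\,\chi(X_\Gamma)=\chi(Y)=\prod_{i=1}^r\chi(Y_i).
\]
It therefore suffices to show \(\chi(Y_i)\neq0\) for a neat lattice \(\Lambda_i\) in a simple factor \(G_i\) acting on an irreducible Hermitian symmetric domain \(\mathcal D_i\). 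Alternatively, one can argue directly with \(\Gamma\), since the Gauss--Bonnet argument below is insensitive to the product structure.

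\textbf{Gauss--Bonnet for finite volume.} For a torsion-free lattice \(\Lambda\) in a semisimple group \(G\) without compact factors, Harder's Gauss--Bonnet theorem gives
\[
  \chi(\Lambda\backslash\mathcal D)=\int_{\Lambda\backslash\mathcal D}\omega_{\mathrm{GB}},
\]
where \(\omega_{\mathrm{GB}}\) is the Gauss--Bonnet--Chern form of the invariant metric. This holds for any lattice, arithmetic or not. For the nonarithmetic ball factors one may instead use Mumford's Hirzebruch proportionality for the toroidal compactification. Indeed, Proposition~\ref{prop:boundary} shows that \(\Omega^1_{\overline X}(\log D)\) is the canonical extension of the automorphic cotangent bundle, so \(\chi(X)=c_n(T_{\overline X}(-\log D))\) is a Chern number of automorphic canonical extensions. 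By proportionality \cite{mumford1977hirzebruch}, such Chern numbers equal \(\mathrm{vol}(X)\) times the corresponding Chern number of the compact dual \(\check{\mathcal D}\), with a sign \((-1)^n\).

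\textbf{Nonvanishing.} Since \(\omega_{\mathrm{GB}}\) is \(G\)-invariant, it is a constant multiple \(c\cdot dV\) of the volume form. Proportionality identifies this constant:
\[
  \chi(X)=(-1)^n\frac{\mathrm{vol}(X)}{\mathrm{vol}(\check{\mathcal D})}\,\chi(\check{\mathcal D}),
\]
with the metric on the compact dual normalized dually. The compact dual \(\check{\mathcal D}=\mathbf G_{\C}/P_h\) is a generalized flag variety. It has a Bruhat cell decomposition into even-dimensional cells, so \(\chi(\check{\mathcal D})=|W_G/W_K|>0\); equivalently, \(\operatorname{rank}K=\operatorname{rank}G\) for Hermitian \(G\). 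Since \(\mathrm{vol}(X)>0\), it follows that \(\chi(X)\neq0\), and in fact \((-1)^n\chi(X)>0\), in agreement with Theorem~\ref{thm:smooth-pec}.

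\textbf{Main obstacle.} The delicate step is justifying proportionality, or the Gauss--Bonnet integral, in the noncompact finite-volume setting, especially for nonarithmetic ball quotients, where AMRT and Mumford do not apply directly. There I would cite Harder's Gauss--Bonnet theorem for general lattices. Alternatively, one can use Mok's toroidal compactification, with boundary a union of abelian varieties, together with the vanishing of the boundary contributions for such cusps.
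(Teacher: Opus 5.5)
Your argument is correct in outline and has the same skeleton as the paper's proof. Both pass to the neat product cover, express $\chi$ as $(-1)^n$ times a positive multiple of $\chi(\check{\mathcal D})$ by proportionality, and conclude from the Bruhat decomposition that $\chi(\check{\mathcal D})>0$.

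The difference lies in how $\chi(X)$ is identified with a volume. The paper works algebraically on the toroidal compactification. Norimatsu's logarithmic Gauss--Bonnet gives $\chi(X)=\int_{\overline X}c_n(T_{\overline X}(-\log D))$. This log Chern number is then compared with $\int_{\check{\mathcal D}}c_n(T_{\check{\mathcal D}})$, using Mumford's proportionality theorem for arithmetic factors and Hwang's extension \cite{hwang2004volumes} for nonarithmetic ball factors. Your main route is Riemannian: Gauss--Bonnet for the finite-volume quotient, plus pointwise duality of the invariant curvature forms. This buys independence from compactifications and canonical extensions.

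It also puts all the weight on finite-volume Gauss--Bonnet, and there your citation does not cover what you claim. Harder's theorem is formulated and proved for arithmetic groups, so it does not give ``any lattice, arithmetic or not.'' For nonarithmetic ball factors you need a different input. One option is a Gauss--Bonnet theorem for complete finite-volume manifolds of bounded curvature and finite topological type (Cheeger--Gromov). Another is a direct check that the transgression terms vanish along the Heisenberg-nilmanifold cusps. The simplest is the log-proportionality of \cite{hwang2004volumes}, exactly as the paper does.

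Relatedly, your sentence ``for the nonarithmetic ball factors one may instead use Mumford's Hirzebruch proportionality\dots Proposition~\ref{prop:boundary}'' has the cases reversed. Mumford's theorem and Proposition~\ref{prop:boundary} are both established only for arithmetic quotients, as your own final paragraph acknowledges. With those attributions corrected, the proof goes through.
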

\begin{proof}
    Euler characteristic is multiplicative, and its nonvanishing is preserved by finite covers, so by proposition \ref{prop:neat-product-cover}, it suffices to assume $X_{\Gamma}$ falls into one of the two cases: $\sh{D}$ is an irreducible Hermitian symmetric domain of dimension $n$ with $\Gamma$ an arithmetic neat lattice, or $\sh{D} = \bb{B}^n$ with $\Gamma$ a neat lattice whose parabolic elements are unipotent. Let $(\overline{X}_{\Gamma}, D)$ be the toroidal compactification. \\
    \tab We claim that $\chi(X_{\Gamma}) \neq 0$ is equivalent to $\chi(\check{D}) \neq 0$ where $\check{\sh{D}}$ is the compact dual of $\sh{D}$.  In the arithmetic case, using the notations of section \ref{sec:arithmetic-quot}, we have $E^{-1, 1} \simeq T_{X_{\Gamma}}$ on $X_{\Gamma}$ with the corresponding vector bundle $\mathbf{G}_{\bb{C}} \times^{P_h} \Rm{gr}_{F_h}^{-1} \fr{g}_{\bb{C}} \simeq T_{\check{\sh{D}}}$ on the compact dual. We also have the Mumford canonical extension $\overline{E}^{-1, 1} \simeq T_{\overline{X}_{\Gamma}}(-\log D)$. By \cite[Theorem 3.2]{mumford1977hirzebruch} we have 
    \[ \int_{\overline{X}_{\Gamma}} c_n(T_{\overline{X}_{\Gamma}}(-\log D)) = (-1)^n \cdot K \cdot \int_{\check{\sh{D}}} c_n(T_{\check{\sh{D}}})
    \]
    with $K$ a positive constant. The same proportionality result is true in the nonarithmetic ball quotient case as well \cite[\S2]{hwang2004volumes}. Thus, by (logarithmic) Gauss-Bonnet \cite[Theorem 3]{norimatsu1978kodaira}, 
    \[ \chi(X_{\Gamma}) = (-1)^n \cdot K \cdot \chi(\check{\sh{D}})
    \]
    \tab Finally, $\chi(\check{\sh{D}}) \neq 0$ since, by the Bruhat decomposition, $\check{\sh{D}} = \mathbf{G}_{\bb{C}}/P_h$ is a finite CW-complex with only even $\bb{R}$-dimensional cells. 
\end{proof}

\begin{corollary}
    Let $\sh{D}$ be a Hermitian symmetric domain of noncompact type, and $\Gamma \subset \Rm{Aut}(\sh{D})^\circ$ be a lattice. Denote $\sh{X}_{\Gamma} = [\Gamma\setminus \sh{D}]$ the Deligne-Mumford stack. If $P \in \Rm{Perv}(\sh{X}_{\Gamma}, \bb{Q})$ has full support, then 
    \[\chi^{\Rm{orb}}(\sh{X}_{\Gamma}, P) > 0\]
\end{corollary}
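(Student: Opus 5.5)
We reduce to the neat product cover and there combine Proposition~\ref{prop:AMP-positivity} with Proposition~\ref{prop:Euler-nonvanishing}. By Proposition~\ref{prop:neat-product-cover} we choose a finite index neat subgroup \(\Lambda=\Lambda_1\times\cdots\times\Lambda_r\subset\Gamma\). This gives a representable finite \'etale surjection \(q\colon Y=\Lambda\backslash\mathcal D\to\mathcal X_\Gamma\) of some degree \(d=[\Gamma:\Lambda]\). Here \(Y\) is a smooth quasi-projective variety of dimension \(n=\dim_{\C}\mathcal D\). Since \(q\) is \'etale, \(q^*P\) is perverse on \(Y\). The finite \'etale degree formula \cite[Theorem~2.5\textup{(iii)}]{ArapuraPatel}, already used in Proposition~\ref{prop:PEC-finite-etale}, gives
\[
  \chiorb(\mathcal X_\Gamma,P)=\frac1d\,\chi(Y,q^*P).
\]
It therefore suffices to show \(\chi(Y,q^*P)>0\).

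The next step is to check that \(q^*P\) has full support with generic rank \(r(P)\geq1\). The support of \(P\) is a closed substack, and \(q\) is surjective, so \(\operatorname{Supp}(q^*P)=q^{-1}\operatorname{Supp}(P)\). Full support of \(P\) means this preimage is all of \(Y\). Strictly speaking, \(Y\) may be disconnected as a variety only if some factor \(Y_i\) is disconnected, but each \(\Lambda_i\backslash\mathcal D_i\) is connected because \(\mathcal D_i\) is. So \(Y\) is irreducible and \(r(q^*P)=r(P)\geq1\) by definition of the generic rank on \(\mathcal X_\Gamma\).

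Now apply Proposition~\ref{prop:AMP-positivity} to \(Y\) with the product toroidal compactification \(\overline Y=\prod_i\overline Y_i\), \(D=\overline Y\setminus Y\). By Theorem~\ref{thm:arithmetic-nef}, Proposition~\ref{prop:ball-nef} and Lemma~\ref{lem:product-log-nef}, \(\Omega^1_{\overline Y}(\log D)\) is nef, exactly as in the proof of Theorem~\ref{thm:smooth-pec}. Hence
\[
  \chi(Y,q^*P)\geq r(P)\,(-1)^n\chi(Y).
\]
Theorem~\ref{thm:smooth-pec} gives \((-1)^n\chi(Y)\geq0\), and Proposition~\ref{prop:Euler-nonvanishing}, applied to the torsion-free lattice \(\Lambda\), gives \(\chi(Y)\neq0\). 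Therefore \((-1)^n\chi(Y)>0\), and since \(r(P)\geq1\) we get \(\chi(Y,q^*P)>0\). Dividing by \(d\) yields the claim. The same computation also yields the inequality of Proposition~\ref{prop:strict}, using \(\chiorb(\mathcal X_\Gamma)=\chi(Y)/d\).

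The main point needing care is the compatibility of full support and generic rank with the pullback to \(Y\), in particular the irreducibility of \(Y\). The rest is a direct assembly of results already established. If one prefers not to rely on connectedness, one can instead argue componentwise: on each connected component \(Y'\) of \(Y\), \(q^*P|_{Y'}\) has full support. Applying the above argument to each \(Y'\), which is again a quotient of \(\mathcal D\) by a torsion-free lattice, gives a positive contribution, and the contributions sum to \(\chi(Y,q^*P)\).
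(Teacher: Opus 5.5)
Your proof is correct and follows the same route as the paper: you pull back along a finite \'etale neat cover using \cite[Theorem~2.5(iii)]{ArapuraPatel}, then combine Proposition~\ref{prop:AMP-positivity} with Proposition~\ref{prop:Euler-nonvanishing}. Your version is slightly more careful than the paper's one-line argument, for two reasons: you pass to the specific product cover \(Y\) of Proposition~\ref{prop:neat-product-cover}, which is where nefness of \(\Omega^1_{\overline Y}(\log D)\) has actually been established, whereas the paper only says "assume \(\Gamma\) is neat"; and you check explicitly that full support, the generic rank, and the sign \((-1)^n\chi(Y)>0\) transfer to \(Y\).
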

\begin{proof}
    By \cite[Theorem 2.5(iii)]{ArapuraPatel}, we can assume that $\Gamma$ is neat and it suffices to show that $\chi(X_{\Gamma}, P) > 0$ for all perverse $P$ with full support. This follows from proposition \ref{prop:AMP-positivity} and proposition \ref{prop:Euler-nonvanishing}. 
\end{proof}

\section{Applications}

\subsection{Properties}

For applications we will need the following properties:

\begin{proposition}
\label{prop:PEC-permanence}
Let \(X\) be a complex algebraic variety having PEC. Then the following
statements hold.
\begin{enumerate}
\item Every closed subvariety
$$
  i\colon Z\hookrightarrow X
$$
has PEC.
\item Every locally closed subvariety
\[
  j\colon U\hookrightarrow X
\]
for which \(j\) is affine has PEC. In particular, this applies to the
complement of an effective Cartier divisor.
\item Let \(U\) be either \(X\) or a subvariety occurring in
\textup{(1)} or \textup{(2)}. If
$$
  f\colon Y\longrightarrow U
$$
is finite, then \(Y\) has PEC.
\end{enumerate}
\end{proposition}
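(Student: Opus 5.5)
The plan is to use only formal properties of the six functors: $t$-exactness of certain push-forwards on perverse sheaves, together with invariance of the Euler characteristic under push-forward, $\chi(X,Rf_*K)=\chi(Y,K)$. Each part follows the pattern of the second half of the proof of Proposition~\ref{prop:PEC-finite-etale}. Start with a perverse sheaf on the smaller space. Push it forward to $X$ (or to $U$). Check that the push-forward is still perverse, apply PEC upstairs, and use that push-forward preserves hypercohomology.

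For (1), let $Q\in\Perv(Z,\Q)$. Closed push-forward $i_*=i_!$ is $t$-exact for the perverse $t$-structure, so $i_*Q\in\Perv(X,\Q)$. Since $\mathbb H^j(X,i_*Q)=\mathbb H^j(Z,Q)$, we get $\chi(Z,Q)=\chi(X,i_*Q)\ge 0$.

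For (2), let $Q\in\Perv(U,\Q)$. For an affine quasi-finite locally closed embedding $j$, Artin vanishing (in the form of BBD, Cor.~4.1.3) says that $Rj_*$ is $t$-exact for the perverse $t$-structure. Hence $Rj_*Q$ is perverse on $X$, and $\chi(U,Q)=\chi(X,Rj_*Q)\ge0$. This is the step where the hypotheses genuinely matter. I would be careful to factor $j$ as an open affine embedding into its closure $\overline U$, followed by the closed embedding $\overline U\hookrightarrow X$. The first map is handled by Artin vanishing and the second by (1), so one may equally deduce (2) by first applying (1) to $\overline U$ and then treating the open affine case inside $\overline U$. For the complement of an effective Cartier divisor, the inclusion is affine: locally the complement is $\{g\neq0\}$ in an affine open, and affineness is local on the target. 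So the last sentence of (2) is a special case.

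For (3), $f$ is finite, hence proper with finite fibers. Therefore $Rf_*=Rf_!$ is $t$-exact for the perverse $t$-structure: it is left $t$-exact because it is proper, and right $t$-exact because its fibers are finite. So for $Q\in\Perv(Y,\Q)$ the complex $Rf_*Q$ is perverse on $U$, and $\chi(Y,Q)=\chi(U,Rf_*Q)\ge0$, because $U$ has PEC by (1), by (2), or by hypothesis. I expect the main obstacle to be the $t$-exactness claim in (2), namely quoting Artin vanishing correctly for affine, possibly non-open, locally closed embeddings. The remaining steps are routine bookkeeping.
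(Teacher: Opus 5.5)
Your proposal is correct and is essentially the paper's proof: in each part you push forward by $i_*$, $Rj_*$ or $Rf_*$, invoke perverse $t$-exactness (BBD, Cor.~4.1.3), and use that push-forward preserves hypercohomology, so the factorization through $\overline U$ in (2) is optional. One small fix in (3): properness alone does not give left $t$-exactness (consider $\mathbb P^1\to\mathrm{pt}$); quasi-finiteness makes $Rf_*$ left and $Rf_!$ right $t$-exact, and properness identifies the two --- or simply note that a finite map is affine and quasi-finite and cite Cor.~4.1.3 as the paper does.
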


\begin{proof}

\begin{enumerate}
    \item Consider \(P\in\Perv(Z,\Q)\), then
\(Ri_*P=i_*P\) is perverse and $R\Gamma(X,i_*P)\simeq R\Gamma(Z,P).$
Hence $\chi(Z,P)=\chi(X,i_*P)\geq0.$
    \item Let \(P\in\Perv(U,\Q)\). Since \(j\) is affine and quasi-finite, \(Rj_*P\) is perverse. Moreover, $R\Gamma(X,Rj_*P)\simeq R\Gamma(U,P).$ Therefore$\chi(U,P)=\chi(X,Rj_*P)\geq0.$
    \item Let \(P\in\Perv(Y,\Q)\). Since \(f\) is finite, \(Rf_*P\) is perverse, and $R\Gamma(U,Rf_*P)\simeq R\Gamma(Y,P).$ Since \(U\) has PEC by \textup{(1)} or \textup{(2)}, as appropriate, we obtain $\chi(Y,P)=\chi(U,Rf_*P)\geq0.$
\end{enumerate}
The required perverse \(t\)-exactness statements follow from
\cite[Corollaire~4.1.3]{BBD82}.

\end{proof}

One can upgrade this property to stacks but we do not need it in our examples. For moduli space applications we also record the following corollary.

\begin{corollary}[Arrangement complements]
\label{cor:PEC-arrangement-complement}
Let
$\mathcal X=[\mathcal D/\Gamma]$
be a finite-volume locally Hermitian symmetric Deligne--Mumford stack,
and let $\mathcal H\subset\mathcal D$ be a \(\Gamma\)-invariant locally finite union of hypersurfaces. Suppose
that there is a neat finite-index subgroup $\Lambda\subset\Gamma$
such that $X_\Lambda=\Lambda\backslash\mathcal D$
is a complex algebraic variety and the image $H_\Lambda=\Lambda\backslash\mathcal H \subset X_\Lambda$
is an algebraic effective Cartier divisor. Then
\[
  [(\mathcal D\setminus\mathcal H)/\Gamma]
\]
has PEC.
\end{corollary}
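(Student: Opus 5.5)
The plan is to reduce to the neat variety $X_\Lambda$ and then apply Proposition~\ref{prop:PEC-permanence}(2) together with the finite \'etale invariance of Proposition~\ref{prop:PEC-finite-etale}.

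First, I work at neat level. Set $U_\Lambda := X_\Lambda\setminus H_\Lambda = \Lambda\backslash(\mathcal D\setminus\mathcal H)$. Because $\mathcal H$ is $\Gamma$-invariant, and hence $\Lambda$-invariant, the quotient map identifies $U_\Lambda$ with the open complement of the image divisor. Since $\Lambda$ is neat, and in particular torsion-free, Theorem~\ref{thm:smooth-pec} shows that $X_\Lambda$ has PEC. The divisor $H_\Lambda$ is an effective Cartier divisor, so the inclusion $j\colon U_\Lambda\hookrightarrow X_\Lambda$ is affine: locally it is the complement of the zero locus of a single function. Proposition~\ref{prop:PEC-permanence}(2) then gives PEC for $U_\Lambda$.

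Second, I pass up to the stack. I will show that the natural morphism
\[
  U_\Lambda\simeq[(\mathcal D\setminus\mathcal H)/\Lambda]\longrightarrow[(\mathcal D\setminus\mathcal H)/\Gamma]
\]
is representable, finite \'etale and surjective of degree $[\Gamma:\Lambda]$. The argument is the one in the proof of Proposition~\ref{prop:neat-product-cover}. After base change along the atlas $\mathcal D\setminus\mathcal H\to[(\mathcal D\setminus\mathcal H)/\Gamma]$, the morphism becomes $\Gamma\times^{\Lambda}(\mathcal D\setminus\mathcal H)$, which is a disjoint union of $[\Gamma:\Lambda]$ copies of $\mathcal D\setminus\mathcal H$. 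Equivalently, $[(\mathcal D\setminus\mathcal H)/\Gamma]$ is the open substack of $\mathcal X=[\mathcal D/\Gamma]$ cut out by the invariant open set, and the cover is the base change of $Y\to\mathcal X$.

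To invoke Proposition~\ref{prop:PEC-finite-etale}, I also need a representable finite \'etale atlas of $[(\mathcal D\setminus\mathcal H)/\Gamma]$ by a variety. Here I can use $U_\Lambda$ itself. Alternatively, as the paper notes, base change of the atlas of $\mathcal X$ supplies one. The direction of Proposition~\ref{prop:PEC-finite-etale} I need is ``$\mathcal Y$ has PEC $\Rightarrow$ $\mathcal X$ has PEC'', which uses only the degree formula. Applying it, PEC for $U_\Lambda$ gives $\chiorb([(\mathcal D\setminus\mathcal H)/\Gamma],P)\geq0$ for every perverse $P$.

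I expect the main obstacle to be the algebraicity bookkeeping rather than any new idea. One point is that $[(\mathcal D\setminus\mathcal H)/\Gamma]$ is only given analytically, so I must check that it is a finite-type Deligne--Mumford stack presented as $[U_\Lambda/(\Gamma/\Lambda)]$ when $\Lambda$ is normal. If $\Lambda$ is not normal, I first replace it by its normal core, which is still neat and of finite index; the preimage of $H_\Lambda$ in the corresponding cover remains an effective Cartier divisor. Once this presentation is in place, the Euler--Satake characteristic is computed by the formula \eqref{eq:quotient-stack-Euler-Satake}, and the argument above goes through.
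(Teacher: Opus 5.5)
Your proposal is correct and follows the paper's proof essentially step for step. You obtain PEC for $X_\Lambda$ from Theorem~\ref{thm:smooth-pec}, then PEC for the affine complement $X_\Lambda\setminus H_\Lambda$ from Proposition~\ref{prop:PEC-permanence}(2), and finally descend along the representable finite \'etale surjection $X_\Lambda\setminus H_\Lambda\to[(\mathcal D\setminus\mathcal H)/\Gamma]$ using Proposition~\ref{prop:PEC-finite-etale}; your extra bookkeeping (the base-change computation of the cover, using $U_\Lambda$ itself as the atlas, and passing to the normal core) only makes explicit what the paper leaves implicit.
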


\begin{proof}
By Theorem~\ref{thm:smooth-pec}, the variety \(X_\Lambda\) has PEC.
Since \(H_\Lambda\) is an effective Cartier divisor, the open immersion
\[
  X_\Lambda\setminus H_\Lambda
  \hookrightarrow
  X_\Lambda
\]
is affine. Hence $X_\Lambda\setminus H_\Lambda$
has PEC by Proposition~\ref{prop:PEC-permanence}\textup{(2)}.

The natural morphism
\[
  X_\Lambda\setminus H_\Lambda
  \longrightarrow
  [(\mathcal D\setminus\mathcal H)/\Gamma]
\]
is representable, finite \'etale, and surjective. The conclusion follows
from Proposition~\ref{prop:PEC-finite-etale}.
\end{proof}

\subsection{Moduli of smooth cubic threefolds}

Let $V=\operatorname{Sym}^{3}(\C^{5})^{\vee}$, $
  G=\operatorname{PGL}_{5},$ and
$$
  \mathfrak M_{0}
  =
  \bigl[\mathbb P(V)^{\mathrm{sm}}/G\bigr]
$$

be the Deligne-Mumford stack of smooth cubic threefolds. Allcock--Carlson--Toledo associate to this moduli problem a complex ball
\(\mathbb B^{10}\), an arithmetic group \(P\Gamma\), and two locally
finite \(P\Gamma\)-invariant hyperplane arrangements
\(\mathcal H_c\) and \(\mathcal H_\Delta\), called the chordal and
discriminant arrangements. They construct the moduli space of smooth
cubic threefolds as a complex analytic orbifold; see \cite[\S2]{ACT}. Their period map identifies this orbifold
with its image in the ball quotient, and
\cite[Theorem~7.1]{ACT} identifies that image with the complement of
the two arrangements. Thus there is an isomorphism of analytic
Deligne--Mumford stacks
$$
  \mathfrak M_0^{\mathrm{an}}
  \simeq
  \left[
    \bigl(
      \mathbb B^{10}
      \setminus
      (\mathcal H_c\cup\mathcal H_\Delta)
    \bigr)/P\Gamma
  \right].
$$

\begin{corollary}
\label{cor:cubic-threefold-pec}
The Deligne--Mumford stack \(\mathfrak M_0\) has PEC, i.e.
$$
  \chiorb(\mathfrak M_0,P)\geq0
  \qquad
  \text{for every }P\in\Perv(\mathfrak M_0,\Q).
$$

Moreover, if \(P\) has full support, then $\chiorb(\mathfrak M_0,P)>0.$ In particular, $\chiorb(\mathfrak M_0)>0.$

\end{corollary}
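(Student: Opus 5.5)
The PEC statement will come from Corollary~\ref{cor:PEC-arrangement-complement} applied to $\mathcal D=\mathbb B^{10}$, $\Gamma=P\Gamma$ and $\mathcal H=\mathcal H_c\cup\mathcal H_\Delta$. Then I transport the result along the isomorphism $\mathfrak M_0^{\mathrm{an}}\simeq[(\mathbb B^{10}\setminus\mathcal H)/P\Gamma]$. Since $\chiorb$ and perverse sheaves are defined on the underlying analytic stack, this isomorphism identifies $\Perv(\mathfrak M_0,\Q)$ with perverse sheaves on the quotient stack and preserves $\chiorb$.

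To check the hypotheses, I choose a neat finite-index subgroup $\Lambda\subset P\Gamma$; this is possible because $P\Gamma$ is arithmetic, by \cite[Proposition~3.5]{MilneShimura}. Then $X_\Lambda=\Lambda\backslash\mathbb B^{10}$ is smooth quasi-projective by Baily--Borel. The arrangement $\mathcal H$ is a locally finite $P\Gamma$-invariant union of hyperplanes, each orthogonal to a vector of negative norm. Its image $H_\Lambda$ in $X_\Lambda$ is therefore a finite union of arithmetic sub-ball quotients of codimension one, since there are finitely many $\Lambda$-orbits of such hyperplanes.

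The main obstacle is showing that $H_\Lambda$ is an \emph{algebraic} effective Cartier divisor. I would argue as follows. Each component is the image of a Shimura subvariety, so it is Zariski closed in $X_\Lambda$; this follows from Borel's extension theorem, or because it is the image of the finite morphism $\Lambda_H\backslash H\to X_\Lambda$. Since $X_\Lambda$ is smooth, every reduced hypersurface is an effective Cartier divisor. Corollary~\ref{cor:PEC-arrangement-complement} then gives PEC for $\mathfrak M_0$.

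For strict positivity, take $P$ with full support. I pass to the finite étale cover $U=X_\Lambda\setminus H_\Lambda$, using the degree formula \eqref{eq:quotient-stack-Euler-Satake} and the fact that the generic rank is unchanged. It then suffices to show $\chi(U,P)>0$.

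To do this, I take a toroidal compactification $\overline X_\Lambda$ and log resolve the closure of $H_\Lambda$ to get an SNC pair $(\overline U,D')$ with $U=\overline U\setminus D'$. The key fact I need is that $\Omega^1_{\overline U}(\log D')$ is nef. A cleaner route may be to bound $\chi(U,P)$ via $Rj_*P$ on $X_\Lambda$, but that only yields nonnegativity, so I would instead apply the characteristic-cycle argument directly. Alternatively, I would apply Proposition~\ref{prop:AMP-positivity}-type reasoning to $Rj_*P$. Its characteristic cycle contains $r[T^*_{X_\Lambda}X_\Lambda]$, so Proposition~\ref{prop:AMP-positivity} on $X_\Lambda$ gives $\chi(U,P)=\chi(X_\Lambda,Rj_*P)\ge r\chi(X_\Lambda)>0$, where $(-1)^{10}=1$ and Proposition~\ref{prop:Euler-nonvanishing} makes $\chi(X_\Lambda)>0$.

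I expect to verify that the generic rank of $Rj_*P$ equals $r(P)$, which is clear because $j$ is an open dense immersion. Finally, taking $P=\Q[10]$ gives $\chiorb(\mathfrak M_0)>0$.
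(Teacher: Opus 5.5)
This is essentially the paper's proof: PEC comes from Corollary~\ref{cor:PEC-arrangement-complement} on a neat cover, and strict positivity comes from applying Proposition~\ref{prop:AMP-positivity} to $Rj_*P$ on $X_\Lambda$, which is perverse because $j$ is affine, together with $\chi(X_\Lambda)>0$. One small difference is that you justify algebraicity of $H_\Lambda$ via Borel extension for the sub-ball quotients, where the paper simply cites \cite[Theorem~7.1]{ACT}. Two fixes are needed. First, $\chi(X_\Lambda)>0$ needs both the nonvanishing from Proposition~\ref{prop:Euler-nonvanishing} and the sign $\chi(X_\Lambda)\ge 0$ from Theorem~\ref{thm:smooth-pec}. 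Second, drop the log-resolution detour, which would need a nefness of $\Omega^1_{\overline U}(\log D')$ that nothing in the paper supplies, and drop the mistaken remark that the $Rj_*P$ route only yields nonnegativity.
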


\begin{proof}
Denote \(\mathcal H=\mathcal H_c\cup\mathcal H_\Delta\).
This is a \(P\Gamma\)-invariant locally finite hyperplane arrangement as
shown in the proof of \cite[Theorem~7.1]{ACT} and
\cite[Lemma~7.2 and \S8]{ACT}. Choose a neat finite-index subgroup
\(\Lambda\subset P\Gamma\) and put $B_\Lambda=\Lambda\backslash\mathbb B^{10}.$ The variety \(B_\Lambda\) is smooth and quasi-projective, and
$$
  H_\Lambda=\Lambda\backslash\mathcal H\subset B_\Lambda
$$

is an algebraic divisor, being the pullback of the chordal and
discriminant divisors appearing in \cite[Theorem~7.1]{ACT}. Since
\(B_\Lambda\) is smooth, \(H_\Lambda\) is an effective Cartier divisor.
Thus Corollary~\ref{cor:PEC-arrangement-complement} gives PEC for
$$
  \left[
    \bigl(\mathbb B^{10}\setminus\mathcal H\bigr)/P\Gamma
  \right],
$$

and hence, by the period isomorphism, for \(\mathfrak M_0\).

Now let \(P\) have full support and let \(P_\Lambda\) be its pullback to
\(U_\Lambda=B_\Lambda\setminus H_\Lambda\). Since
\(j\colon U_\Lambda\hookrightarrow B_\Lambda\) is affine,
\(Rj_*P_\Lambda\) is perverse and has the same positive generic rank.
Applying Proposition~\ref{prop:AMP-positivity} to \(Rj_*P_\Lambda\),
together with \(\chi(B_\Lambda)>0\), gives
$$
  \chi(U_\Lambda,P_\Lambda)
  =
  \chi(B_\Lambda,Rj_*P_\Lambda)>0.
$$
Dividing by the degree of the finite \'etale cover
\(U_\Lambda\to\mathfrak M_0^{\mathrm{an}}\) yields
$$
  \chiorb(\mathfrak M_0,P)>0.
$$
Finally, \(\Q_{\mathfrak M_0}[10]\) has full support, so
\(\chiorb(\mathfrak M_0)>0\).
\end{proof}

\subsection{Moduli of smooth cubic fourfolds}

Let
\[
  V_4=\operatorname{Sym}^{3}(\C^{6})^{\vee},
  \qquad
  G_4=\operatorname{PGL}_{6},
\]
and let
\[
  \mathfrak M_4
  =
  \bigl[\mathbb P(V_4)^{\mathrm{sm}}/G_4\bigr]
\]
be the Deligne--Mumford stack of smooth cubic fourfolds.  It is smooth
of dimension \(20\).

Let \(\Lambda_0\) be the primitive middle-cohomology lattice of a cubic
fourfold, of signature \((20,2)\), and let
\[
  \mathcal D
  \subset
  \mathbb P(\Lambda_0\otimes_{\mathbb Z}\C)
\]
be one of the two connected components of the associated type-IV period
domain.  The global monodromy group is
\[
  \Gamma=\operatorname O^*(\Lambda_0).
\]
The arithmetic quotient \(\mathcal D/\Gamma\) is quasi-projective; see
\cite[\S2.1]{LazaZheng}.

There are two \(\Gamma\)-invariant hyperplane arrangements
\(\mathcal H_6\) and \(\mathcal H_2\).  The first is determined by the
short roots, i.e.\ the norm-\(2\) vectors of \(\Lambda_0\), while the
second is determined by the long roots, i.e.\ the norm-\(6\) vectors of
divisibility \(3\).  Their quotients
\[
  \mathcal C_6=\mathcal H_6/\Gamma,
  \qquad
  \mathcal C_2=\mathcal H_2/\Gamma
\]
are Heegner divisors in \(\mathcal D/\Gamma\); see
\cite[Definition~2.1 and Remark~2.2]{LazaZheng}.

By \cite[Theorem~2.3]{LazaZheng}, which cites the global Torelli theorem
of Voisin \cite{VoisinCubicFourfolds}, the work of Hassett on special
cubic fourfolds \cite{HassettSpecialCubicFourfolds}, and the
determination of the period image by Laza \cite[Theorem~1.1]{Laza} and by Looijenga \cite{LooijengaCubicFourfolds}, the period
map induces an isomorphism of quasi-projective varieties
$$
  \mathcal M_4
  \simeq
  \Gamma\backslash
  \bigl(
    \mathcal D\setminus(\mathcal H_2\cup\mathcal H_6)
  \bigr).
$$
As stated immediately after
\cite[Theorem~2.3]{LazaZheng}, identifying the natural orbifold
structures on the two sides is equivalent to the strong global Torelli
theorem. This is supplied by
\cite[Proposition~2.4]{LazaZheng}.  Hence the period map upgrades to an isomorphism of analytic Deligne--Mumford stacks \begin{equation}
\label{eq:cubic-fourfold-stack-period}
  \mathfrak M_4^{\mathrm{an}}
  \simeq
  \left[
    \bigl(
      \mathcal D\setminus(\mathcal H_2\cup\mathcal H_6)
    \bigr)/\Gamma
  \right].
\end{equation}

\begin{corollary}
\label{cor:cubic-fourfold-pec}
The Deligne--Mumford stack \(\mathfrak M_4\) has PEC. Thus
$$
  \chiorb(\mathfrak M_4,P)\geq0
  \qquad
  \text{for every }P\in\Perv(\mathfrak M_4,\Q).
$$

Moreover, if \(P\) has full support, then $\chiorb(\mathfrak M_4,P)>0.$ In particular, $\chiorb(\mathfrak M_4)>0.$

\end{corollary}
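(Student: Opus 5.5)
The plan mirrors the proof of Corollary~\ref{cor:cubic-threefold-pec}, with the ball replaced by the type-IV domain $\mathcal D$ and $P\Gamma$ replaced by $\Gamma=\operatorname O^*(\Lambda_0)$. Set $\mathcal H=\mathcal H_2\cup\mathcal H_6$.

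First I establish PEC. The group $\Gamma$ is an arithmetic subgroup of $\operatorname O(20,2)$, so it is a lattice in $\operatorname{Aut}(\mathcal D)^\circ$ after passing to the image in the identity component if necessary. By \cite[Proposition~3.5]{MilneShimura}, choose a neat normal finite-index subgroup $\Lambda\subset\Gamma$. Then $X_\Lambda=\Lambda\backslash\mathcal D$ is a smooth quasi-projective variety by Baily--Borel \cite{BailyBorel}. The arrangement $\mathcal H$ is $\Gamma$-invariant and locally finite, since it consists of hyperplanes $v^\perp$ for lattice vectors $v$ of fixed norm. Its image in $\mathcal D/\Gamma$ is the union of the Heegner divisors $\mathcal C_2\cup\mathcal C_6$, and these are algebraic divisors \cite[Definition~2.1]{LazaZheng}. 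Hence $H_\Lambda=\Lambda\backslash\mathcal H$ is the preimage of an algebraic divisor under the finite map $X_\Lambda\to\mathcal D/\Gamma$. It is therefore an algebraic divisor, and it is Cartier because $X_\Lambda$ is smooth. Corollary~\ref{cor:PEC-arrangement-complement} then gives PEC for $[(\mathcal D\setminus\mathcal H)/\Gamma]$. The stack period isomorphism \eqref{eq:cubic-fourfold-stack-period} transports PEC to $\mathfrak M_4$.

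Next I prove strict positivity. Let $P$ have full support, let $U_\Lambda=X_\Lambda\setminus H_\Lambda$, and let $P_\Lambda$ be the pullback of $P$ along the finite étale cover $U_\Lambda\to\mathfrak M_4^{\mathrm{an}}$. The inclusion $j\colon U_\Lambda\hookrightarrow X_\Lambda$ is affine. So $Rj_*P_\Lambda$ is perverse on $X_\Lambda$, and its generic rank equals $r(P)>0$. Choose a toroidal compactification $(\overline X_\Lambda,D)$ as in Proposition~\ref{prop:building-block-compactifications}. Its log cotangent bundle is nef by Theorem~\ref{thm:arithmetic-nef}. Proposition~\ref{prop:AMP-positivity} then gives
\[
\chi(U_\Lambda,P_\Lambda)=\chi(X_\Lambda,Rj_*P_\Lambda)\geq r(P)\,(-1)^{20}\chi(X_\Lambda)=r(P)\,\chi(X_\Lambda).
\]
By Theorem~\ref{thm:smooth-pec} we have $\chi(X_\Lambda)\geq0$, and by Proposition~\ref{prop:Euler-nonvanishing} we have $\chi(X_\Lambda)\neq0$. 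Hence $\chi(X_\Lambda)>0$ and $\chi(U_\Lambda,P_\Lambda)>0$. Dividing by the degree of the cover, using \cite[Theorem~2.5(iii)]{ArapuraPatel}, gives $\chiorb(\mathfrak M_4,P)>0$. The final claim follows by taking $P=\Q_{\mathfrak M_4}[20]$, which is perverse with full support because $\mathfrak M_4$ is smooth of dimension $20$. The even dimension means no sign appears.

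The main obstacle is verifying the hypotheses of Corollary~\ref{cor:PEC-arrangement-complement} precisely. Three points need care.
\begin{enumerate}
\item $\Gamma=\operatorname O^*(\Lambda_0)$ must act on the chosen component $\mathcal D$ through $\operatorname{Aut}(\mathcal D)^\circ$. If it does not, I replace $\Gamma$ by a finite-index subgroup and invoke Proposition~\ref{prop:PEC-finite-etale}. Possible kernels of the action, such as $\pm1$, are handled the same way, since the stack quotient only changes by a finite gerbe-like factor.
\item $H_\Lambda$ must be algebraic, not merely analytic. For this I would cite Borel's extension theorem, or directly the algebraicity of Heegner divisors.
\item The map $X_\Lambda\setminus H_\Lambda\to[(\mathcal D\setminus\mathcal H)/\Gamma]$ must be representable finite étale. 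This follows exactly as in Proposition~\ref{prop:neat-product-cover}.
\end{enumerate}
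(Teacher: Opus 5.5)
Your proposal is correct and follows the paper's proof essentially step for step. You pass to a neat $\Lambda$, pull back $\mathcal C_2\cup\mathcal C_6$ to an effective Cartier divisor on $\Lambda\backslash\mathcal D$, apply Corollary~\ref{cor:PEC-arrangement-complement} together with the stacky period isomorphism, and then apply Proposition~\ref{prop:AMP-positivity} to $Rj_*P_\Lambda$ and divide by the degree of the cover, exactly as the paper does. Two small remarks: your derivation of $\chi(X_\Lambda)>0$ from Theorem~\ref{thm:smooth-pec} and Proposition~\ref{prop:Euler-nonvanishing} fills in a step the paper only asserts, and your worry (1) resolves itself, since every element of a neat subgroup of $\operatorname O^*(\Lambda_0)$ has determinant $1$ and none equals $-1$, so $\Lambda$ embeds in $\operatorname{Aut}(\mathcal D)^\circ$.
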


\begin{proof}
Set
$$
  \mathcal H_4=\mathcal H_2\cup\mathcal H_6.
$$

By \cite[Definition~2.1]{LazaZheng}, this is a
\(\Gamma\)-invariant arithmetic hyperplane arrangement, and

$$
  \mathcal C_2=\mathcal H_2/\Gamma,
  \qquad
  \mathcal C_6=\mathcal H_6/\Gamma
$$

are algebraic divisors on the quasi-projective variety
\(\mathcal D/\Gamma\); see also
\cite[Remark~2.2]{LazaZheng}.

Choose a neat finite-index subgroup
\(\Lambda\subset\Gamma\), and put $B_\Lambda=\Lambda\backslash\mathcal D.$ By the Baily--Borel theorem, \(B_\Lambda\) is a smooth
quasi-projective variety, and the natural map
$$
  \pi_\Lambda\colon
  B_\Lambda\longrightarrow\mathcal D/\Gamma
$$

is finite and algebraic. Let $H_{4,\Lambda}=\pi_\Lambda^{-1}(\mathcal C_2\cup\mathcal C_6).$ Then \(H_{4,\Lambda}\subset B_\Lambda\) is an algebraic divisor. Since \(B_\Lambda\) is smooth, \(H_{4,\Lambda}\) is an effective Cartier divisor. Hence
$$
  B_\Lambda\setminus H_{4,\Lambda}
  \hookrightarrow B_\Lambda
$$
is an affine open immersion. Thus the hypotheses of
Corollary~\ref{cor:PEC-arrangement-complement} are satisfied, and
$$
  \left[
    \bigl(
      \mathcal D\setminus\mathcal H_4
    \bigr)/\Gamma
  \right]
$$

has PEC.  By the period isomorphism
\eqref{eq:cubic-fourfold-stack-period}, \(\mathfrak M_4\) has PEC.

Moreover, if \(P\in\Perv(\mathfrak M_4,\Q)\) has full support, then its
pullback to $U_\Lambda=B_\Lambda\setminus H_{4,\Lambda}$ has full support and positive generic rank. Since
\(U_\Lambda\hookrightarrow B_\Lambda\) is affine, its direct image is
perverse with the same generic rank. Proposition~\ref{prop:AMP-positivity},
together with \(\chi(B_\Lambda)>0\), therefore gives $\chi(U_\Lambda,P_\Lambda)>0.$ Dividing by the degree of the finite \'etale cover
\(U_\Lambda\to\mathfrak M_4^{\mathrm{an}}\) yields
$$
  \chiorb(\mathfrak M_4,P)>0.
$$

\end{proof}

\section*{Acknowledgments}

The authors thank Donu Arapura for helpful discussions and suggestions.

\bibliographystyle{amsplain}
\bibliography{references}

@article{ACT,
  author  = {Allcock, Daniel and Carlson, James A. and Toledo, Domingo},
  title   = {The Moduli Space of Cubic Threefolds as a Ball Quotient},
  journal = {Mem. Amer. Math. Soc.},
  volume  = {209},
  number  = {985},
  year    = {2011},
  pages   = {xii+70},
  doi     = {10.1090/S0065-9266-10-00591-0},
}

@unpublished{ArapuraMesePatel,
  author = {Arapura, Donu and Mese, Chikako and Patel, Deepam},
  title  = {A Higher Dimensional Log Riemann--Hurwitz Inequality and Rigidity of Covers},
  note   = {Unpublished manuscript},
  year   = {2026},
}

@article{ArapuraPatel,
  author  = {Arapura, Donu and Patel, Deepam},
  title   = {Nonnegativity of Signed Euler Characteristics of Moduli of Curves and Abelian Varieties},
  journal = {Math. Ann.},
  volume  = {394},
  year    = {2026},
  note    = {Article No. 34},
  doi     = {10.1007/s00208-026-03357-0},
}

@book{AMRT,
  author    = {Ash, Armand and Mumford, David and Rapoport, Michael and Tai, Yung-Sheng},
  title     = {Smooth Compactifications of Locally Symmetric Varieties},
  edition   = {2},
  series    = {Cambridge Mathematical Library},
  publisher = {Cambridge University Press},
  address   = {Cambridge},
  year      = {2010},
  note      = {With the collaboration of Peter Scholze},
  doi       = {10.1017/CBO9780511674693},
}

@article{BailyBorel,
  author  = {Baily, Walter L., Jr. and Borel, Armand},
  title   = {Compactification of Arithmetic Quotients of Bounded Symmetric Domains},
  journal = {Ann. of Math. (2)},
  volume  = {84},
  year    = {1966},
  pages   = {442--528},
}

@incollection{BBD82,
  author    = {Beilinson, Alexander A. and Bernstein, Joseph and Deligne, Pierre},
  title     = {Faisceaux pervers},
  booktitle = {Analysis and Topology on Singular Spaces, I (Luminy, 1981)},
  series    = {Ast{\'e}risque},
  volume    = {100},
  publisher = {Soci{\'e}t{\'e} Math{\'e}matique de France},
  address   = {Paris},
  year      = {1982},
  pages     = {5--171},
}

@article{brunebarbe2018symmetric,
  author  = {Brunebarbe, Yohan},
  title   = {Symmetric Differentials and Variations of {H}odge Structures},
  journal = {J. Reine Angew. Math.},
  volume  = {743},
  year    = {2018},
  pages   = {133--161},
  doi     = {10.1515/crelle-2015-0109},
}

@article{Cadorel,
  author  = {Cadorel, Beno{\^i}t},
  title   = {Symmetric Differentials on Complex Hyperbolic Manifolds with Cusps},
  journal = {J. Differential Geom.},
  volume  = {118},
  number  = {3},
  year    = {2021},
  pages   = {373--398},
  doi     = {10.4310/jdg/1625860621},
}

@book{deligne2006equations,
  author    = {Deligne, Pierre},
  title     = {{\'E}quations diff{\'e}rentielles {\`a} points singuliers r{\'e}guliers},
  series    = {Lecture Notes in Mathematics},
  volume    = {163},
  publisher = {Springer-Verlag},
  address   = {Berlin--New York},
  year      = {1970},
  doi       = {10.1007/BFb0061194},
}

@article{Deng,
  author  = {Deng, Ya},
  title   = {A Characterization of Complex Quasi-Projective Manifolds Uniformized by Unit Balls},
  journal = {Math. Ann.},
  volume  = {384},
  year    = {2022},
  pages   = {1833--1881},
  note    = {With an appendix by Ya Deng and Beno{\^i}t Cadorel},
  doi     = {10.1007/s00208-021-02334-z},
}

@book{Dimca,
  author    = {Dimca, Alexandru},
  title     = {Sheaves in Topology},
  series    = {Universitext},
  publisher = {Springer-Verlag},
  address   = {Berlin},
  year      = {2004},
}

@article{fujino2019semipositivity,
  author  = {Fujino, Osamu and Fujisawa, Taro},
  title   = {On Semipositivity Theorems},
  journal = {Math. Res. Lett.},
  volume  = {26},
  number  = {5},
  year    = {2019},
  pages   = {1359--1382},
}

@article{harris1989functorial,
  author  = {Harris, Michael},
  title   = {Functorial Properties of Toroidal Compactifications of Locally Symmetric Varieties},
  journal = {Proc. London Math. Soc. (3)},
  volume  = {59},
  number  = {1},
  year    = {1989},
  pages   = {1--22},
  doi     = {10.1112/plms/s3-59.1.1},
}

@article{HassettSpecialCubicFourfolds,
  author  = {Hassett, Brendan},
  title   = {Special Cubic Fourfolds},
  journal = {Compos. Math.},
  volume  = {120},
  number  = {1},
  year    = {2000},
  pages   = {1--23},
  doi     = {10.1023/A:1001706324425},
}

@book{Helgason,
  author    = {Helgason, Sigurdur},
  title     = {Differential Geometry, Lie Groups, and Symmetric Spaces},
  series    = {Graduate Studies in Mathematics},
  volume    = {34},
  publisher = {American Mathematical Society},
  address   = {Providence, RI},
  year      = {2001},
  note      = {Corrected reprint of the 1978 original},
  doi       = {10.1090/gsm/034},
}

@article{hwang2004volumes,
  author  = {Hwang, Jun-Muk},
  title   = {On the Volumes of Complex Hyperbolic Manifolds with Cusps},
  journal = {Internat. J. Math.},
  volume  = {15},
  number  = {6},
  year    = {2004},
  pages   = {567--572},
  doi     = {10.1142/S0129167X04002442},
}

@book{KashiwaraSchapira,
  author    = {Kashiwara, Masaki and Schapira, Pierre},
  title     = {Sheaves on Manifolds},
  series    = {Grundlehren der mathematischen Wissenschaften},
  volume    = {292},
  publisher = {Springer-Verlag},
  address   = {Berlin},
  year      = {1990},
}

@article{LaszloOlsson,
  author  = {Laszlo, Yves and Olsson, Martin},
  title   = {Perverse {$t$}-Structure on {A}rtin Stacks},
  journal = {Math. Z.},
  volume  = {261},
  number  = {4},
  year    = {2009},
  pages   = {737--748},
  doi     = {10.1007/s00209-008-0348-z},
}

@article{Laza,
  author  = {Laza, Radu},
  title   = {The Moduli Space of Cubic Fourfolds via the Period Map},
  journal = {Ann. of Math. (2)},
  volume  = {172},
  number  = {1},
  year    = {2010},
  pages   = {673--711},
  doi     = {10.4007/annals.2010.172.673},
}

@article{LazaZheng,
  author  = {Laza, Radu and Zheng, Zhiwei},
  title   = {Automorphisms and Periods of Cubic Fourfolds},
  journal = {Math. Z.},
  volume  = {300},
  number  = {2},
  year    = {2022},
  pages   = {1455--1507},
  doi     = {10.1007/s00209-021-02810-x},
}

@book{LazarsfeldPositivityII,
  author    = {Lazarsfeld, Robert},
  title     = {Positivity in Algebraic Geometry II: Positivity for Vector Bundles, and Multiplier Ideals},
  series    = {Ergebnisse der Mathematik und ihrer Grenzgebiete. 3. Folge},
  volume    = {49},
  publisher = {Springer-Verlag},
  address   = {Berlin},
  year      = {2004},
  doi       = {10.1007/978-3-642-18810-7},
}

@article{LooijengaCubicFourfolds,
  author  = {Looijenga, Eduard},
  title   = {The Period Map for Cubic Fourfolds},
  journal = {Invent. Math.},
  volume  = {177},
  number  = {1},
  year    = {2009},
  pages   = {213--233},
  doi     = {10.1007/s00222-009-0178-6},
}

@article{MargulisArithmeticity,
  author  = {Margulis, G. A.},
  title   = {Arithmeticity of the Irreducible Lattices in the Semisimple Groups of Rank Greater than One},
  journal = {Invent. Math.},
  volume  = {76},
  number  = {1},
  year    = {1984},
  pages   = {93--120},
  doi     = {10.1007/BF01388494},
}

@incollection{MilneShimura,
  author    = {Milne, J. S.},
  title     = {Introduction to {S}himura Varieties},
  booktitle = {Harmonic Analysis, the Trace Formula, and Shimura Varieties},
  series    = {Clay Mathematics Proceedings},
  volume    = {4},
  publisher = {American Mathematical Society},
  address   = {Providence, RI},
  year      = {2005},
  pages     = {265--378},
  url       = {https://www.jmilne.org/math/xnotes/svi2004.pdf},
}

@incollection{Mok,
  author    = {Mok, Ngaiming},
  title     = {Projective Algebraicity of Minimal Compactifications of Complex-Hyperbolic Space Forms of Finite Volume},
  booktitle = {Perspectives in Analysis, Geometry, and Topology},
  series    = {Progress in Mathematics},
  volume    = {296},
  publisher = {Birkh{\"a}user/Springer},
  address   = {New York},
  year      = {2012},
  pages     = {331--354},
}

@article{mumford1977hirzebruch,
  author  = {Mumford, David},
  title   = {Hirzebruch's Proportionality Theorem in the Non-Compact Case},
  journal = {Invent. Math.},
  volume  = {42},
  year    = {1977},
  pages   = {239--272},
  doi     = {10.1007/BF01389790},
}

@article{norimatsu1978kodaira,
  author  = {Norimatsu, Yoshiki},
  title   = {Kodaira Vanishing Theorem and Chern Classes for {$\partial$}-Manifolds},
  journal = {Proc. Japan Acad. Ser. A Math. Sci.},
  volume  = {54},
  number  = {4},
  year    = {1978},
  pages   = {107--108},
}

@article{VoisinCubicFourfolds,
  author  = {Voisin, Claire},
  title   = {Th{\'e}or{\`e}me de {T}orelli pour les cubiques de {$\mathbf{P}^5$}},
  journal = {Invent. Math.},
  volume  = {86},
  number  = {3},
  year    = {1986},
  pages   = {577--601},
  doi     = {10.1007/BF01389270},
}

@book{WitteMorris,
  author    = {Witte Morris, Dave},
  title     = {Introduction to Arithmetic Groups},
  publisher = {Deductive Press},
  year      = {2015},
  url       = {https://deductivepress.ca/dmorris/books/IntroArithGrps/IntroArithGrps-FINAL.pdf},
}

@article{WuZhou,
  author  = {Wu, Lei and Zhou, Peng},
  title   = {Log {$\mathscr D$}-Modules and Index Theorems},
  journal = {Forum Math. Sigma},
  volume  = {9},
  year    = {2021},
  pages   = {e3},
  note    = {32 pp.},
  doi     = {10.1017/fms.2020.62},
}

\end{document}